\newtheorem{theorem}{Theorem}[section]
\newtheorem{lemma}[theorem]{Lemma}
\newtheorem{proposition}[theorem]{Proposition}
\newtheorem{corollary}[theorem]{Corollary}
\theoremstyle{definition}
\newtheorem{definition}[theorem]{Definition}
\newtheorem{remark}[theorem]{Remark}
\begin{document}
\setcounter{page}{1}

\title[ ]{On Some Properties and Inequalities for the Nielsen's $\beta$-Function}

\author[K. Nantomah]{Kwara Nantomah }

\address{ Department of Mathematics, Faculty of Mathematical Sciences, University for Development Studies, Navrongo Campus, P. O. Box 24, Navrongo, UE/R, Ghana. }
\email{\textcolor[rgb]{0.00,0.00,0.84}{mykwarasoft@yahoo.com, knantomah@uds.edu.gh}}




\subjclass[2010]{33Bxx, 33B99, }

\keywords{Nielsen's $\beta$-function; digamma function; subadditive; superadditive; inequality }

\date{Received: xxxxxx; Revised: xxxxxx; Accepted: xxxxxx.
\newline \indent $^{*}$ Corresponding author}

\begin{abstract}
In this study, we obtain some convexity, monotonicity and additivity properties as well as some inequalities involving the Nielsen's $\beta$-function which was introduced in 1906. 
\end{abstract} \maketitle


\section{Introduction and Preliminaries}

\noindent
The Nielsen's $\beta$-function, $\beta(x)$  which was introduced in \cite{Nielsen-1906-Teubner} is defined as  
\begin{align}
\beta(x)&=\int_0^{1} \frac{t^{x-1}}{1+t}\,dt, \quad x>0   \label{eqn:Nielsen-funt-Integral-Rep-1} \\
&=\sum_{k=0}^{\infty}\frac{(-1)^k}{k+x},  \quad x>0   \label{eqn:Nielsen-funt-Series-Rep}
\end{align}
and by change of variables, the representation ~(\ref{eqn:Nielsen-funt-Integral-Rep-1}) can be written as 
\begin{equation}\label{eqn:Nielsen-funt-Integral-Rep-2}
\beta(x)=\int_0^{\infty} \frac{e^{-xt}}{1+e^{-t}}\,dt, \quad x>0 . 
\end{equation}
The function $\beta(x)$ is also defined as \cite{Nielsen-1906-Teubner}
\begin{equation}\label{eqn:Nielsen-funt-via-Digamma}
\beta(x)=\frac{1}{2}\left \{ \psi \left( \frac{x+1}{2}\right) - \psi \left( \frac{x}{2}\right) \right \} 
\end{equation}
where $\psi(x)=\frac{d}{dx}\ln \Gamma(x)=\frac{\Gamma'(x)}{\Gamma(x)}$ is the digamma function and $\Gamma(x)$ is the Euler's Gamma function. See also \cite{Boyadzhiev-Medina-Moll-2009-Scientia}, \cite{Connon-2012-arXiv}, \cite{Gradshteyn-Ryzhik-2014-AP} and  \cite{Medina-Moll-2009-Scientia}.  \\

\noindent
It is known that function $\beta(x)$ satisfies the following properties \cite{Boyadzhiev-Medina-Moll-2009-Scientia},\cite{Nielsen-1906-Teubner}.
\begin{align}
\beta(x+1)&=\frac{1}{x} - \beta(x),   \label{eqn:Nielsen-funct-Funct-Eqn} \\
\beta(x) + \beta(1-x)&=\frac{\pi}{\sin \pi x}, \label{eqn:Reflect-form-Nielsen-funct}.
\end{align}
In particular, $\beta(1)=\ln 2$, $\beta\left( \frac{1}{2}\right)=\frac{\pi}{2}$, $\beta\left( \frac{3}{2}\right)=2-\frac{\pi}{2}$ and $\beta(2)=1-\ln 2$.

\begin{proposition}
The function $\beta(x)$ is related to the classical Euler's beta function, $B(x,y)$ in the following ways.
\begin{equation}\label{eqn:Nielsen-funt-rel-Beta-1}
\beta(x) = - \frac{d}{dx}\left\{ \ln B\left(\frac{x}{2}, \frac{1}{2}\right) \right\}, 
\end{equation}


\begin{equation}\label{eqn:Nielsen-funt-rel-Beta-3}
\beta(x) + \beta(1-x) = B(x,1-x) .
\end{equation}

\end{proposition}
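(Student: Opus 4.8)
The plan is to prove the two identities separately, in each case reducing the Nielsen $\beta$-function to gamma-function quantities by means of the digamma representation (\ref{eqn:Nielsen-funt-via-Digamma}) together with the standard factorization $B(a,b)=\frac{\Gamma(a)\Gamma(b)}{\Gamma(a+b)}$. No new machinery is needed; the work is essentially bookkeeping once the right classical identities are invoked.

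For (\ref{eqn:Nielsen-funt-rel-Beta-1}), I would first write
\[
B\left(\frac{x}{2}, \frac{1}{2}\right) = \frac{\Gamma\!\left(\frac{x}{2}\right)\Gamma\!\left(\frac{1}{2}\right)}{\Gamma\!\left(\frac{x+1}{2}\right)},
\]
then take logarithms to turn the quotient into the sum
\[
\ln B\left(\frac{x}{2}, \frac{1}{2}\right) = \ln\Gamma\!\left(\frac{x}{2}\right) + \ln\Gamma\!\left(\frac{1}{2}\right) - \ln\Gamma\!\left(\frac{x+1}{2}\right).
\]
Differentiating term by term, the constant $\ln\Gamma(1/2)$ drops out, while the chain rule applied to the remaining two terms produces a factor of $\tfrac{1}{2}$ and the digamma values $\psi(x/2)$ and $\psi((x+1)/2)$. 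Attaching the minus sign then gives $\tfrac{1}{2}\{\psi((x+1)/2)-\psi(x/2)\}$, which is precisely $\beta(x)$ by (\ref{eqn:Nielsen-funt-via-Digamma}). The only points to watch here are the chain-rule factor $\tfrac{1}{2}$ and the overall sign.

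For (\ref{eqn:Nielsen-funt-rel-Beta-3}), I would evaluate both sides independently and check they agree. The left-hand side is already handled by the reflection property (\ref{eqn:Reflect-form-Nielsen-funct}), which gives $\beta(x)+\beta(1-x)=\frac{\pi}{\sin\pi x}$. For the right-hand side, since $x+(1-x)=1$ and $\Gamma(1)=1$, the factorization yields $B(x,1-x)=\Gamma(x)\Gamma(1-x)$, and Euler's reflection formula for the gamma function gives $\Gamma(x)\Gamma(1-x)=\frac{\pi}{\sin\pi x}$ for $0<x<1$. Since both sides equal $\frac{\pi}{\sin\pi x}$, the identity follows at once.

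In short, there is no genuine obstacle: the first part is a one-line logarithmic differentiation and the second is the observation that both members coincide with $\frac{\pi}{\sin\pi x}$. If anything, the only care required is to keep track of the factor $\tfrac{1}{2}$ from the argument $x/2$ in the first identity and to recall $\Gamma(1)=1$ in the second.
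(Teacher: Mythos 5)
Your proposal is correct and follows essentially the same route as the paper: the first identity via logarithmic differentiation of $B(\tfrac{x}{2},\tfrac{1}{2})=\Gamma(\tfrac{x}{2})\Gamma(\tfrac{1}{2})/\Gamma(\tfrac{x+1}{2})$ combined with (\ref{eqn:Nielsen-funt-via-Digamma}), and the second via the reflection property (\ref{eqn:Reflect-form-Nielsen-funct}). You merely spell out the step $B(x,1-x)=\Gamma(x)\Gamma(1-x)=\pi/\sin\pi x$ that the paper leaves implicit.
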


\begin{proof}
By the Euler's beta function $B(x,y)=\frac{\Gamma(x)\Gamma(y)}{\Gamma(x+y)}$, we obtain
\begin{equation}\label{eqn:Beta-Relation}
B\left( \frac{x}{2}, \frac{1}{2}\right)=\frac{\Gamma \left(\frac{x}{2}\right) \Gamma \left(\frac{1}{2}\right)}{\Gamma \left(\frac{x+1}{2}\right)}.
\end{equation}
Then by taking the logarithmic derivative of ~(\ref{eqn:Beta-Relation}) and using ~(\ref{eqn:Nielsen-funt-via-Digamma}), we obtain
\begin{align*}
\frac{d}{dx} \left\{ \ln B\left( \frac{x}{2}, \frac{1}{2}\right) \right\}
=\frac{1}{2} \frac{B'\left( \frac{x}{2}, \frac{1}{2}\right)}{B\left( \frac{x}{2}, \frac{1}{2}\right)}
&= \frac{1}{2} \left \{  \frac{\Gamma' \left(\frac{x}{2}\right)}{\Gamma \left(\frac{x}{2}\right)} - \frac{\Gamma' \left(\frac{x+1}{2}\right)}{\Gamma \left(\frac{x+1}{2}\right)} \right \}   \\
&= \frac{1}{2} \left \{ \psi \left(\frac{x}{2}\right) - \psi \left(\frac{x+1}{2}\right) \right \}  \\
&= - \beta(x) 
\end{align*}
yielding the result ~(\ref{eqn:Nielsen-funt-rel-Beta-1}). The result ~(\ref{eqn:Nielsen-funt-rel-Beta-3}) follows easily from the relation ~(\ref{eqn:Reflect-form-Nielsen-funct}).
\end{proof}

\begin{remark}
The function $\beta(x)$ is referred to as the \textit{incomplete beta function} in  \cite{Boyadzhiev-Medina-Moll-2009-Scientia} and \cite{Medina-Moll-2009-Scientia}. However, this should not be confused with the incomplete beta function which is usually defined as 
\begin{equation*}
B(a;x,y)=\int_{0}^{a}t^{x-1}(1-t)^{y-1}\,dt \quad x>0, y>0
\end{equation*}
or the regularized incomplete beta function which is defined as
\begin{equation*}
I_{a}(x,y)=\frac{B(a;x,y)}{B(x,y)}  \quad x>0, y>0.
\end{equation*}
Also, the function should not be confused with Dirichlet's beta function which is defined as \cite{Finch-2003-CUP}
\begin{equation*}
\beta^*(x)=\sum_{k=0}^{\infty}\frac{(-1)^k}{(2k+1)^x} = \frac{1}{\Gamma(x)}\int_{0}^{\infty}\frac{t^{x-1}}{e^{t}+e^{-t}}\,dt, \quad x>0 .
\end{equation*}
\end{remark}

\noindent
We shall use the notations $\mathbb{N}=\{1,2,3,\dots,\}$ and $\mathbb{N}_0=\mathbb{N} \cup \{0\}$ in the rest of the paper. \\

\noindent
By differentiating $m$ times of  ~(\ref{eqn:Nielsen-funt-Integral-Rep-1}), ~(\ref{eqn:Nielsen-funt-Series-Rep}) and ~(\ref{eqn:Nielsen-funt-Integral-Rep-2}), we obtain
\begin{align}
\beta^{(m)}(x)&=\int_0^{1} \frac{(\ln t)^{m}t^{x-1}}{1+t}\,dt, \quad x>0   \label{eqn:m-deriv-Nielsen-funt-Integral-Rep-1}\\
&=(-1)^{m}m!\sum_{k=0}^{\infty}\frac{(-1)^k}{(k+x)^{m+1}},  \quad x>0 \label{eqn:m-deriv-Nielsen-funt-Series-Rep} \\
&=(-1)^{m} \int_0^{\infty} \frac{t^{m}e^{-xt}}{1+e^{-t}}\,dt, \quad x>0 \label{eqn:m-deriv-Nielsen-funt-Integral-Rep-2} 
\end{align}
for $m\in \mathbb{N}_0$. It is clear that $\beta^{(0)}(x)=\beta(x)$. In particular, we have
\begin{align}
\beta^{(m)}(1)&=(-1)^{m}m!\sum_{k=0}^{\infty}\frac{(-1)^k}{(k+1)^{m+1}} = (-1)^{m}m! \eta(m+1), \quad m\in \mathbb{N}_0   \label{eqn:Nielsen-vrs-Eta-Funct} \\
&=(-1)^{m}m!\left(1-\frac{1}{2^m} \right) \zeta(m+1), \quad m\in \mathbb{N} \label{eqn:Nielsen-vrs-Zeta-Funct}
\end{align}
where $\eta(x)$ is the Dirichlet's eta function and $\zeta(x)$ is the Riemann zeta function defined as
\begin{equation*}
\eta(x)=\sum_{k=0}^{\infty}\frac{(-1)^k}{(k+1)^x}, \, x>0 \quad \text{and} \quad \zeta(x)=\sum_{k=1}^{\infty}\frac{1}{k^x}, \,  x>1.
\end{equation*}

\noindent
Then by differentiating $m$ times of  ~(\ref{eqn:Nielsen-funt-via-Digamma}) and ~(\ref{eqn:Nielsen-funct-Funct-Eqn}), we obtain respectively
\begin{equation}\label{eqn:Nielsen-funct-Gen-Funct-Eqn}
\beta^{(m)}(x+1)=\frac{(-1)^m m!}{x^{m+1}} - \beta^{(m)}(x) 
\end{equation}
and
\begin{equation}\label{eqn:Nielsen-funt-via-Digamma-mth-deriv}
\beta^{(m)}(x)=\frac{1}{2^{m+1}}\left \{ \psi^{(m)} \left( \frac{x+1}{2}\right) - \psi^{(m)} \left( \frac{x}{2}\right) \right \} .
\end{equation}

\noindent
For rational arguments $x=\frac{p}{q}$, the function $\psi^{(m)}(x)$ takes the form 
\begin{equation}\label{eqn:Polygamma-Rational-values}
\psi^{(m)}\left(\frac{p}{q}\right) = (-1)^{m+1}m!q^{m+1}\sum_{k=0}^{\infty}\frac{1}{(qk+p)^{m+1}}, \quad m\geq1
\end{equation}
which implies 
\begin{equation}\label{eqn:Polygamma-Rational-values-relation}
\psi^{(m)}\left(\frac{3}{4}\right) - \psi^{(m)}\left(\frac{1}{4}\right) = (-1)^{m+1}m!4^{m+1} \left\{ \sum_{k=0}^{\infty}\frac{1}{(4k+3)^{m+1}} - \sum_{k=0}^{\infty}\frac{1}{(4k+1)^{m+1}} \right\} . 
\end{equation}
Let $x=\frac{1}{2}$ in ~(\ref{eqn:Nielsen-funt-via-Digamma-mth-deriv}). Then we obtain
\begin{equation}\label{eqn:mth-deriv-Nielsen-func-of-half-1}
\beta^{(m)}\left(\frac{1}{2} \right)=\frac{1}{2^{m+1}}\left \{ \psi^{(m)} \left( \frac{3}{4}\right) - \psi^{(m)} \left( \frac{1}{4}\right) \right \} 
\end{equation}
which by ~(\ref{eqn:Polygamma-Rational-values-relation}) can be written as
\begin{equation}\label{eqn:mth-deriv-Nielsen-func-of-half-2}
\beta^{(m)}\left(\frac{1}{2} \right) = (-1)^{m+1}m!2^{m+1} \left\{ \sum_{k=0}^{\infty}\frac{1}{(4k+3)^{m+1}} - \sum_{k=0}^{\infty}\frac{1}{(4k+1)^{m+1}} \right\} .
\end{equation}
Now let $m=1$ in ~(\ref{eqn:mth-deriv-Nielsen-func-of-half-2}). Then we obtain
\begin{equation}\label{eqn:eqn:1st-deriv-Nielsen-func-of-half-A}
\beta' \left(\frac{1}{2} \right) = 4\left\{ \sum_{k=0}^{\infty}\frac{1}{(4k+3)^{2}} - \sum_{k=0}^{\infty}\frac{1}{(4k+1)^{2}} \right\} = -4G
\end{equation}
where $G=0.915 965 594 177...$ is the Catalan's constant.

\begin{remark}
The Catalan's constant has several interesting representations \cite{Bradley-2001-Online}, and amongst them are:
\begin{equation*}\label{eqn:Catalan-Constant-Pop-Rep}
G=\sum_{k=0}^{\infty}\frac{(-1)^k}{(2k+1)^2}, 
\end{equation*}
\begin{equation}\label{eqn:Catalan-Constant-Rep-1}
G=-\frac{\pi^2}{8}+2\sum_{k=0}^{\infty}\frac{1}{(4k+1)^2}, 
\end{equation}
\begin{equation}\label{eqn:Catalan-Constant-Rep-2}
G=\frac{\pi^2}{8}-2\sum_{k=0}^{\infty}\frac{1}{(4k+3)^2} .
\end{equation}
Thus, ~(\ref{eqn:eqn:1st-deriv-Nielsen-func-of-half-A}) is a consequence ~(\ref{eqn:Catalan-Constant-Rep-1})  and ~(\ref{eqn:Catalan-Constant-Rep-2}) .
\end{remark}

\noindent
 Equivalently, by letting $m=1$ in ~(\ref{eqn:mth-deriv-Nielsen-func-of-half-1}) we obtain
\begin{equation*}\label{eqn:eqn:1st-deriv-Nielsen-func-of-half-B}
\beta'\left(\frac{1}{2}\right)=\frac{1}{4}\left \{ \psi' \left( \frac{3}{4}\right) - \psi' \left( \frac{1}{4}\right) \right \}=-4G
\end{equation*}
since $ \psi' \left( \frac{1}{4}\right)=\pi^2 + 8G$ and $ \psi' \left( \frac{3}{4}\right)=\pi^2 - 8G$. See \cite{Boyadzhiev-Medina-Moll-2009-Scientia} and \cite{Kolbig-1996-JCAM}. 

\noindent
By using ~(\ref{eqn:Nielsen-vrs-Eta-Funct}), ~(\ref{eqn:Nielsen-vrs-Zeta-Funct}), ~(\ref{eqn:Nielsen-funct-Gen-Funct-Eqn}) and ~(\ref{eqn:eqn:1st-deriv-Nielsen-func-of-half-A}), we derive the following special values.
\begin{align*}
\beta'(1)&=-\frac{1}{2}\zeta(2)=-\frac{\pi^2}{12}, \\
\beta'(2)&=-1+\frac{\pi^2}{12}, \\
\beta'(3)&=\frac{3}{4} - \frac{\pi^2}{12},  \\
\beta'\left(\frac{3}{2}\right)&=4(G-1), \\
\beta'\left(\frac{5}{2}\right)&=\frac{40}{9}-4G.
\end{align*}
More special values may be derived by using similar procedures. As shown in \cite{Boyadzhiev-Medina-Moll-2009-Scientia} and \cite{Gradshteyn-Ryzhik-2014-AP}, the Nielsen's $\beta$-function  is very useful in evaluating certain integrals.

\section{Main Results}

\noindent
To start with, we recall the following well-known definitions.

\begin{definition}\label{def:Log-Convexity}
A function $f:I\rightarrow \mathbb{R}$ is said to be logarithmically convex if 
\begin{equation*}
\log f(ux + vy)\leq u \log f(x)+ v \log f(y)
\end{equation*}
or equivalently
\begin{equation*}
f(ux + vy) \leq (f(x))^{u}(f(y))^{v}
\end{equation*}
for each $x,y\in I$ and $u,v>0$ such that $u+v=1$. 
\end{definition}

\begin{definition}\label{def:complete-monotonicity}
A function $f: (0,\infty)\rightarrow R$ is said to be completely monotonic  if $f$ has derivatives of all order and
\begin{equation*}
(-1)^{k}f^{(k)}(x)\geq0 \quad \text{for} \quad x\in(0,\infty), \quad k\in \mathbb{N}_0 .
\end{equation*}
\end{definition}

\begin{lemma}\label{lem:monotonicity-properties-Nielsen-function}
For $x>0$, the following statements hold .
\begin{enumerate}[(i)]
\item $\beta(x)$ is decreasing.
\item $\beta^{(m)}(x)$ is positive and decreasing if $m$ is even.
\item $\beta^{(m)}(x)$ is negative and increasing if $m$ is odd.
\item $\left\lvert \beta^{(m)}(x) \right\rvert $ is decreasing for all $m\in \mathbb{N}$.
\end{enumerate}
\end{lemma}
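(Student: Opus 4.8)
The plan is to read all four statements off the Laplace-type integral representation (\ref{eqn:m-deriv-Nielsen-funt-Integral-Rep-2}). For $m\in\mathbb{N}_0$ and $x>0$ set
\[
I_m(x)=\int_0^{\infty}\frac{t^m e^{-xt}}{1+e^{-t}}\,dt,
\]
so that (\ref{eqn:m-deriv-Nielsen-funt-Integral-Rep-2}) reads $\beta^{(m)}(x)=(-1)^m I_m(x)$. The integrand is strictly positive on $(0,\infty)$ and decays exponentially for each fixed $x>0$, so $I_m(x)$ is a finite, strictly positive quantity; in particular $I_m(x)>0$ for every $m$ and every $x>0$. Since $|\beta^{(m)}(x)|=I_m(x)$, this single observation already fixes the sign of every derivative: $\beta^{(m)}(x)$ carries exactly the sign of $(-1)^m$, which is positive when $m$ is even and negative when $m$ is odd. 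This gives the positivity claim in (ii) and the negativity claim in (iii), and the case $m=0$ gives positivity of $\beta$ itself.

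For the monotonicity statements I would differentiate under the integral sign. Because the integrand and its $x$-derivative $-t^{m+1}e^{-xt}/(1+e^{-t})$ are dominated, uniformly for $x$ in compact subsets of $(0,\infty)$, by integrable functions of $t$, differentiation under the integral is legitimate and yields
\[
I_m'(x)=-\int_0^{\infty}\frac{t^{m+1}e^{-xt}}{1+e^{-t}}\,dt=-I_{m+1}(x)<0.
\]
Equivalently, $\frac{d}{dx}\beta^{(m)}(x)=\beta^{(m+1)}(x)=(-1)^{m+1}I_{m+1}(x)$. When $m$ is even, $(-1)^{m+1}=-1$, so $\beta^{(m+1)}(x)<0$ and $\beta^{(m)}$ is decreasing, proving the monotonicity half of (ii); the case $m=0$ is exactly (i). When $m$ is odd, $(-1)^{m+1}=+1$, so $\beta^{(m+1)}(x)>0$ and $\beta^{(m)}$ is increasing, proving (iii). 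Together these two facts say precisely that $\beta$ is completely monotonic in the sense of Definition \ref{def:complete-monotonicity}, since $(-1)^m\beta^{(m)}(x)=I_m(x)\ge 0$ for all $m$.

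Finally, part (iv) is immediate from the same computation: $|\beta^{(m)}(x)|=I_m(x)$ and $I_m'(x)=-I_{m+1}(x)<0$, so $|\beta^{(m)}(x)|$ is strictly decreasing for every $m$. I expect no genuine obstacle here beyond justifying differentiation under the integral sign; the exponential decay of the integrand makes the required domination routine, so the only care needed is to record a dominating function explicitly rather than to overcome any real difficulty. An alternative route avoiding the analytic justification would be to use the series representation (\ref{eqn:m-deriv-Nielsen-funt-Series-Rep}) with a pairing of consecutive terms, but the integral representation is cleaner and disposes of all four parts simultaneously.
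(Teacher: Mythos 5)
Your proposal is correct and follows exactly the route the paper intends: the paper's proof consists of the single remark that the claims ``follow easily from'' the integral representations (\ref{eqn:Nielsen-funt-Integral-Rep-2}) and (\ref{eqn:m-deriv-Nielsen-funt-Integral-Rep-2}), and your argument is precisely the spelled-out version of that, reading the sign of $\beta^{(m)}$ from $(-1)^m I_m(x)$ with $I_m(x)>0$ and the monotonicity from $I_m'(x)=-I_{m+1}(x)<0$. No gaps; you simply supply the details the paper omits.
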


\begin{proof}
These follow easily from ~(\ref{eqn:Nielsen-funt-Integral-Rep-2}) and ~(\ref{eqn:m-deriv-Nielsen-funt-Integral-Rep-2}).
\end{proof}

\begin{proposition}\label{pro:complete-monotonicity-Nielsen-funct}
The function $\beta(x)$ is completely monotonic. 
\end{proposition}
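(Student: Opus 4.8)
The plan is to verify the defining condition of complete monotonicity directly from the exponential integral representation~(\ref{eqn:m-deriv-Nielsen-funt-Integral-Rep-2}). By Definition~\ref{def:complete-monotonicity}, I must establish two things: that $\beta$ possesses derivatives of every order on $(0,\infty)$, and that $(-1)^m \beta^{(m)}(x) \geq 0$ for all $x > 0$ and all $m \in \mathbb{N}_0$.

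First I would dispose of the smoothness requirement. Existence of all derivatives is already captured by~(\ref{eqn:m-deriv-Nielsen-funt-Integral-Rep-2}): the parameter integral $\int_0^{\infty} t^m e^{-xt}/(1+e^{-t})\,dt$ converges uniformly on compact subsets of $(0,\infty)$ because of the rapid decay of the kernel $e^{-xt}$, so differentiation under the integral sign is legitimate and each $\beta^{(m)}$ is represented by that formula.

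The key step is then simply to multiply~(\ref{eqn:m-deriv-Nielsen-funt-Integral-Rep-2}) by $(-1)^m$, which cancels the sign factor and yields
\begin{equation*}
(-1)^m \beta^{(m)}(x) = \int_0^{\infty} \frac{t^m e^{-xt}}{1+e^{-t}}\,dt .
\end{equation*}
For $x > 0$ and $t \in (0,\infty)$ the integrand is a quotient of strictly positive quantities, hence non-negative, so the integral is $\geq 0$. Since this holds for every $m \in \mathbb{N}_0$, the required inequality $(-1)^m \beta^{(m)}(x) \geq 0$ follows at once.

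There is no genuine obstacle here: the sign bookkeeping is essentially the whole content, and the positivity of the integrand makes the conclusion immediate. Indeed, parts (ii) and (iii) of Lemma~\ref{lem:monotonicity-properties-Nielsen-function} already record the cases $m$ even and $m$ odd separately. The only point deserving a word of care is the interchange of differentiation and integration, which is standard for Laplace-type integrals with an exponentially decaying kernel and could be invoked without detailed justification.
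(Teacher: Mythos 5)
Your proposal is correct and follows essentially the same route as the paper: both apply $(-1)^m$ to the integral representation~(\ref{eqn:m-deriv-Nielsen-funt-Integral-Rep-2}) and observe that the resulting integrand $t^m e^{-xt}/(1+e^{-t})$ is non-negative. Your additional remark on justifying differentiation under the integral sign is a reasonable extra bit of care that the paper leaves implicit.
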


\begin{proof}
Let $x>0$ and $k\in \mathbb{N}_0 $. Then by ~(\ref{eqn:m-deriv-Nielsen-funt-Integral-Rep-2}) obtain
\begin{align*}
(-1)^{k} \beta^{(k)}(x)&=(-1)^{2k}\int_0^{\infty} \frac{t^{k}e^{-xt}}{1+e^{-t}}\,dt \geq0
\end{align*}
which completes the proof.
\end{proof}

\begin{remark}
More generally, $\beta^{(m)}(x)$ is completely monotonic if $m$ is even and $-\beta^{(m)}(x)$ is completely monotonic if $m$ is odd. To see this, note that for $x>0$ and $k,m\in \mathbb{N}_0$, we obtain
\begin{align*}
(-1)^{k} \beta^{(m+k)}(x)&=(-1)^{m+2k}\int_0^{\infty} \frac{t^{m+k}e^{-xt}}{1+e^{-t}}\,dt \geq (\leq)0
\end{align*}
respectively for even(odd) $m$.
\end{remark}

\begin{theorem}\label{thm:Double-Ineq-Nielsen-funct}
The double-inequality 
\begin{equation}\label{eqn:Double-Ineq-Nielsen-funct}
\frac{\beta'(a)+\beta'(b)}{2} \leq 
\frac{\beta(b) - \beta(a)}{b-a} \leq
\beta'\left( \frac{a+b}{2}\right)
\end{equation}
holds for $a,b>0$.
\end{theorem}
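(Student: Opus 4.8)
The plan is to recognize \eqref{eqn:Double-Ineq-Nielsen-funct} as a Hermite--Hadamard inequality for the function $g=\beta'$, the decisive structural input being that $g$ is concave on $(0,\infty)$. First I would rewrite the middle expression as the mean value of $\beta'$ over $[a,b]$: by the fundamental theorem of calculus,
\begin{equation*}
\frac{\beta(b)-\beta(a)}{b-a}=\frac{1}{b-a}\int_a^b \beta'(t)\,dt .
\end{equation*}
Since all three terms in \eqref{eqn:Double-Ineq-Nielsen-funct} are unchanged under interchanging $a$ and $b$, I may assume without loss of generality that $a<b$.

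The concavity of $g=\beta'$ is the crucial fact. We have $g''=\beta'''=\beta^{(3)}$, and since $3$ is odd, Lemma~\ref{lem:monotonicity-properties-Nielsen-function}(iii) gives $\beta^{(3)}(x)<0$ for $x>0$. Hence $g''<0$ on $(0,\infty)$, so $g=\beta'$ is strictly concave there. This is exactly the property that drives both halves of the double inequality, and it explains why the midpoint value ends up on the \emph{upper} side rather than the lower.

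For the left-hand inequality I would use that the graph of a concave function lies above each of its chords: the chord $\ell$ joining $(a,g(a))$ to $(b,g(b))$ satisfies $\ell(t)\le g(t)$ on $[a,b]$, and $\int_a^b \ell(t)\,dt=\tfrac{g(a)+g(b)}{2}(b-a)$, so $\tfrac{\beta'(a)+\beta'(b)}{2}\le \tfrac{1}{b-a}\int_a^b g(t)\,dt$. For the right-hand inequality I would instead use the tangent line $T(t)=g(m)+g'(m)(t-m)$ at the midpoint $m=\tfrac{a+b}{2}$: concavity gives $g(t)\le T(t)$, and because $\int_a^b (t-m)\,dt=0$ by symmetry about $m$, integrating yields $\int_a^b g(t)\,dt\le g(m)(b-a)$, i.e. $\tfrac{1}{b-a}\int_a^b g(t)\,dt\le \beta'\!\left(\tfrac{a+b}{2}\right)$. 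Chaining the two estimates reproduces \eqref{eqn:Double-Ineq-Nielsen-funct} precisely.

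I do not anticipate a genuine obstacle; the only point demanding care is the orientation of the inequalities, which is reversed from the textbook convex case because $\beta'$ is concave. If one prefers a self-contained derivation over citing Hermite--Hadamard as a black box, the left estimate also follows by setting $F(b)=\int_a^b g(t)\,dt-\tfrac{g(a)+g(b)}{2}(b-a)$, noting $F(a)=0$, and computing $F'(b)=\tfrac12\bigl(g(b)-g(a)-g'(b)(b-a)\bigr)\ge 0$ from the tangent inequality for the concave $g$; the geometric chord-and-tangent argument above is nonetheless the cleanest route.
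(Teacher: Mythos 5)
Your proof is correct and follows essentially the same route as the paper: both reduce the claim to the Hermite--Hadamard inequality applied to $\beta'$, using that $\beta'''<0$ makes $-\beta'$ convex (equivalently, $\beta'$ concave). The only difference is that you additionally supply a self-contained chord-and-tangent proof of Hermite--Hadamard, whereas the paper simply cites the classical inequality.
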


\begin{proof}
We employ the classical Hermite-Hadamard inequality which states that if $f:[a,b]\rightarrow \mathbb{R}$ is convex, then
\begin{equation*}
f\left(\frac{a+b}{2}\right) \leq
\frac{1}{b-a}\int_{a}^{b}f(x)\,dx  \leq
\frac{f(a)+f(b)}{2} .
\end{equation*}
Without loss of generality, let $b\geq a>0$ and $f(x)=-\beta'(x)$ for $x\in[a,b]$. Then $f(x)$ is convex and consequently, we obtain
\begin{equation*}
-\beta' \left(\frac{a+b}{2}\right) \leq
-\frac{1}{b-a}\int_{a}^{b}\beta'(x)\,dx  \leq
-\frac{\beta'(a)+\beta'(b)}{2} .
\end{equation*}
which gives the result ~(\ref{eqn:Double-Ineq-Nielsen-funct}). Alternatively, since $\beta'(x)$ is continuous and concave (i.e. $\beta'''(x)<0$) on $(0,\infty)$, then by Theorem 1 of \cite{Merkle-1998-AM}, we obtain the desired result.
\end{proof}

\begin{theorem}\label{thm:Ineq-mth-deriv-Nielsen-funct-via-Holder}
Let $m,n\in \mathbb{N}_0$, $a>1$, $\frac{1}{a}+\frac{1}{b}=1$ such that $\frac{m}{a}+\frac{n}{b}\in \mathbb{N}_0$. Then, the inequality
\begin{equation}\label{eqn:Ineq-mth-deriv-Nielsen-funct-via-Holder}
\left|  \beta^{(\frac{m}{a}+\frac{n}{b})}\left(\frac{x}{a}+\frac{y}{b}\right) \right|    \leq 
 \left| \beta^{(m)}(x) \right|^{\frac{1}{a}} 
 \left| \beta^{(n)}(y) \right|^{\frac{1}{b}}
\end{equation}
holds for $x,y>0$.
\end{theorem}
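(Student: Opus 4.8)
The plan is to reduce the claimed inequality to a single application of Hölder's integral inequality, using the Laplace-type representation \eqref{eqn:m-deriv-Nielsen-funt-Integral-Rep-2}. The first observation I would make is that the integral occurring there is manifestly nonnegative, so the absolute value can be stripped cleanly: for every $m\in\mathbb{N}_0$ and $x>0$,
\begin{equation*}
\left| \beta^{(m)}(x) \right| = \int_0^{\infty} \frac{t^{m}e^{-xt}}{1+e^{-t}}\,dt .
\end{equation*}
The same identity applies to the left-hand side, since the hypothesis $\frac{m}{a}+\frac{n}{b}\in\mathbb{N}_0$ guarantees that $\beta^{(\frac{m}{a}+\frac{n}{b})}$ is a genuine integer-order derivative to which \eqref{eqn:m-deriv-Nielsen-funt-Integral-Rep-2} applies.

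The key algebraic step is to factor the integrand of the left-hand side into two pieces matched to the exponents $\frac{1}{a}$ and $\frac{1}{b}$. Writing $k=\frac{m}{a}+\frac{n}{b}$ and using the conjugacy relation $\frac{1}{a}+\frac{1}{b}=1$ to split the denominator as $\frac{1}{1+e^{-t}}=(1+e^{-t})^{-1/a}(1+e^{-t})^{-1/b}$, I would verify the pointwise identity
\begin{equation*}
\frac{t^{k}e^{-(\frac{x}{a}+\frac{y}{b})t}}{1+e^{-t}}
= \left( \frac{t^{m}e^{-xt}}{1+e^{-t}} \right)^{\!1/a}
\left( \frac{t^{n}e^{-yt}}{1+e^{-t}} \right)^{\!1/b},
\end{equation*}
which holds because the powers of $t$ add as $\frac{m}{a}+\frac{n}{b}=k$, the exponentials combine to give $-(\frac{x}{a}+\frac{y}{b})t$, and the two split denominators recombine to $1+e^{-t}$.

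With this factorization in hand, I would apply Hölder's inequality with conjugate exponents $a,b$ to the nonnegative functions $f(t)=\frac{t^{m}e^{-xt}}{1+e^{-t}}$ and $g(t)=\frac{t^{n}e^{-yt}}{1+e^{-t}}$, obtaining
\begin{equation*}
\int_0^{\infty} f(t)^{1/a} g(t)^{1/b}\,dt
\leq \left( \int_0^{\infty} f(t)\,dt \right)^{1/a}
\left( \int_0^{\infty} g(t)\,dt \right)^{1/b}.
\end{equation*}
Recognizing the left side as $\bigl|\beta^{(k)}(\frac{x}{a}+\frac{y}{b})\bigr|$ and the two factors on the right as $\bigl|\beta^{(m)}(x)\bigr|^{1/a}$ and $\bigl|\beta^{(n)}(y)\bigr|^{1/b}$ yields exactly \eqref{eqn:Ineq-mth-deriv-Nielsen-funt-via-Holder}. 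I do not expect a serious analytic obstacle: the nonnegativity of the integrands makes both the removal of the absolute values and the application of Hölder immediate, so the only point requiring care is the bookkeeping in the factorization, and in particular checking that the stated hypotheses ($a>1$, the conjugacy $\frac{1}{a}+\frac{1}{b}=1$, and the integrality of $k$) are precisely what make each step legitimate.
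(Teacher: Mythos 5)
Your argument is correct and is essentially identical to the paper's proof: both use the integral representation \eqref{eqn:m-deriv-Nielsen-funt-Integral-Rep-2}, split the integrand via $\frac{1}{1+e^{-t}}=(1+e^{-t})^{-1/a}(1+e^{-t})^{-1/b}$, and apply H\"{o}lder's inequality with conjugate exponents $a,b$. The only difference is cosmetic: you make explicit the removal of the absolute values via the nonnegativity of the integrand, a step the paper performs silently.
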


\begin{proof}
By the relation ~(\ref{eqn:m-deriv-Nielsen-funt-Integral-Rep-2}) and the H\"{o}lder's inequality,  we obtain
\begin{align*}
\left|  \beta^{(\frac{m}{a}+\frac{n}{b})}\left(\frac{x}{a}+\frac{y}{b}\right) \right|
&=\int_0^{\infty} \frac{t^{(\frac{m}{a}+\frac{n}{b})}e^{-(\frac{x}{a}+\frac{y}{b}) t}}{1+e^{-t}}\,dt  \\
&=\int_0^{\infty} \frac{t^{\frac{m}{a}}e^{-\frac{xt}{a}}}{(1+e^{-t})^{\frac{1}{a}}}.
\frac{t^{\frac{n}{b}}e^{-\frac{yt}{b}}}{(1+e^{-t})^{\frac{1}{b}}}\,dt  \\
&\leq \left(  \int_0^{\infty} \frac{t^{m}e^{-xt}}{1+e^{-t}}\,dt \right)^{\frac{1}{a}} 
 \left(\int_0^{\infty} \frac{t^{n}e^{-yt}}{1+e^{-t}}\,dt \right)^{\frac{1}{b}} \\
&= \left| \beta^{(m)}(x) \right|^{\frac{1}{a}}   \left| \beta^{(n)}(y) \right|^{\frac{1}{b}} 
\end{align*}
which completes the proof.
\end{proof}

\begin{remark}
Note that the absolute signs in ~(\ref{eqn:Ineq-mth-deriv-Nielsen-funct-via-Holder}) are not required if $m$ and $n$ are even.
\end{remark}

\begin{remark}
If $m=n$ is even in Theorem \ref{thm:Ineq-mth-deriv-Nielsen-funct-via-Holder}, then the inequality ~(\ref{eqn:Ineq-mth-deriv-Nielsen-funct-via-Holder}) becomes
\begin{equation}\label{eqn:Ineq-mth-deriv-Nielsen-funct-log-convexity}
 \beta^{(m)} \left(\frac{x}{a}+\frac{y}{b}\right)  \leq 
\left( \beta^{(m)}(x) \right)^{\frac{1}{a}} 
 \left( \beta^{(m)}(y) \right)^{\frac{1}{b}}
\end{equation}
which implies that the function $\beta^{(m)}(x)$ is logarithmically convex for even $m$. Moreover, if $m=0$ in ~(\ref{eqn:Ineq-mth-deriv-Nielsen-funct-log-convexity}), then we obtain
\begin{equation}\label{eqn:Ineq-Nielsen-funct-log-convexity}
 \beta \left(\frac{x}{a}+\frac{y}{b}\right)  \leq 
\left( \beta(x) \right)^{\frac{1}{a}} 
 \left( \beta(y) \right)^{\frac{1}{b}}
\end{equation}
implies that $\beta(x)$ is logarithmically convex.
\end{remark}

\begin{remark}
Let $a=b=2$, $x=y$ and $m=n+2$  in Theorem \ref{thm:Ineq-mth-deriv-Nielsen-funct-via-Holder}. Then we obain the Turan-type inequality
\begin{equation}\label{eqn:Turan-type-Nielsen-funct-abs}
\left|\beta^{(n+1)}(x)\right|^2 \leq  \left|\beta^{(n+2)}(x)\right| \left|\beta^{(n)}(x)\right| .
\end{equation}
Furthermore, if $n=0$ in ~(\ref{eqn:Turan-type-Nielsen-funct-abs}) then we  get
\begin{equation}\label{eqn:Turan-type-Nielsen-funct}
\left( \beta'(x) \right)^2 \leq \beta''(x)\beta(x) .
\end{equation}
\end{remark}

\begin{theorem}\label{thm:expo-x-times-Nielsen-funct}
Let $m\in \mathbb{N}_0$ be even. Then the function
\begin{equation}\label{eqn:expo-x-times-Nielsen-funct}
Q(x)=e^{ax}\beta^{(m)}(x)
\end{equation}
is convex for $x>0$ and any real number $a$.
\end{theorem}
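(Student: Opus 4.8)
The plan is to verify convexity directly by establishing $Q''(x)\geq 0$ for all $x>0$. First I would differentiate $Q(x)=e^{ax}\beta^{(m)}(x)$ twice using the product rule. Since each differentiation of $\beta^{(m)}$ merely raises the order of the derivative while the exponential factor reproduces itself, the computation collapses neatly and one collects
\[
Q''(x)=e^{ax}\left[a^{2}\beta^{(m)}(x)+2a\,\beta^{(m+1)}(x)+\beta^{(m+2)}(x)\right].
\]
Because $e^{ax}>0$, the whole problem reduces to proving that the bracketed expression is nonnegative.

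The key step is to substitute the integral representation (\ref{eqn:m-deriv-Nielsen-funt-Integral-Rep-2}) for each of the three derivatives. Since $m$ is even, the signs $(-1)^{m},(-1)^{m+1},(-1)^{m+2}$ are $+,-,+$ respectively, so after factoring out the common kernel $\frac{t^{m}e^{-xt}}{1+e^{-t}}$ the three terms fuse into a single integral
\[
a^{2}\beta^{(m)}(x)+2a\,\beta^{(m+1)}(x)+\beta^{(m+2)}(x)=\int_{0}^{\infty}\frac{t^{m}e^{-xt}}{1+e^{-t}}\left(a^{2}-2at+t^{2}\right)\,dt.
\]
The crucial observation is that $a^{2}-2at+t^{2}=(t-a)^{2}\geq 0$, so the integrand is the product of the manifestly nonnegative kernel (here $t^{m}\geq 0$ precisely because $m$ is even) and a perfect square. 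Hence the integral is nonnegative, giving $Q''(x)\geq 0$ and thus the convexity of $Q$.

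I do not anticipate any genuine obstacle; once $Q''$ is written out and the integral formula is inserted, the argument is entirely mechanical. The only point demanding care is the bookkeeping of the signs arising from $(-1)^{m+j}$, since it is exactly this step that must produce an exact square $(t-a)^{2}$ rather than an indefinite quadratic in $t$. This is where the hypothesis that $m$ is even is essential, both to align the three signs and to guarantee $t^{m}\geq 0$ throughout the range of integration.
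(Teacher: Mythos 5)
Your proposal is correct, and it reaches the key inequality by a genuinely different route from the paper. Both proofs begin identically, reducing the problem to showing
\begin{equation*}
a^{2}\beta^{(m)}(x)+2a\,\beta^{(m+1)}(x)+\beta^{(m+2)}(x)\geq 0 .
\end{equation*}
The paper treats this expression as a quadratic in $a$ with positive leading coefficient $\beta^{(m)}(x)>0$ and shows its discriminant is nonpositive by invoking the Tur\'an-type inequality $\left(\beta^{(m+1)}(x)\right)^{2}\leq \beta^{(m)}(x)\beta^{(m+2)}(x)$, itself obtained earlier via H\"older's inequality. You instead substitute the integral representation for each term and fuse them into a single integral whose integrand contains the perfect square $(t-a)^{2}$; this is more self-contained, since it bypasses the Tur\'an inequality entirely (in effect re-deriving it, because nonnegativity of the quadratic for every real $a$ is equivalent to the discriminant condition). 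One small correction: your remark that $t^{m}\geq 0$ ``precisely because $m$ is even'' is misplaced --- $t^{m}>0$ for all $t>0$ whatever the parity of $m$. The parity of $m$ matters only for aligning the signs $(-1)^{m},(-1)^{m+1},(-1)^{m+2}$ as $+,-,+$ so that the quadratic in $t$ is $+(t-a)^{2}$ rather than $-(t-a)^{2}$; for odd $m$ the same computation would give concavity instead.
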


\begin{proof}
Let $m$ be even and $a$ be any real number. Then for $x>0$,  
\begin{align*}
Q'(x)&= ae^{ax}\beta^{(m)}(x) + e^{ax}\beta^{(m+1)}(x) , \\
Q''(x)&= a^2e^{ax}\beta^{(m)}(x) + 2a e^{ax}\beta^{(m+1)}(x) + e^{ax}\beta^{(m+2)}(x)  \\
&= e^{ax} \left[ a^2\beta^{(m)}(x) + 2a\beta^{(m+1)}(x) + \beta^{(m+2)}(x) \right] .
\end{align*}
The quadratic function $f(a)=a^2\beta^{(m)}(x) + 2a\beta^{(m+1)}(x) + \beta^{(m+2)}(x)$ has a discriminant  $\Delta=4\left[ \left(\beta^{(m+1)}(x)\right)^2 - \beta^{(m)}(x)\beta^{(m+2)}(x) \right] \leq0$ as a result of ~(\ref{eqn:Turan-type-Nielsen-funct-abs}). Then, since $\beta^{(m)}(x)>0$, it follows that $f(a)\geq0$. Thus, $Q''(x)\geq0$ and this completes the proof.
\end{proof}

\begin{theorem}\label{thm:Nielsen-funct-to-power-a}
Let $m\in \mathbb{N}_0$ be even. Then the function
\begin{equation}\label{eqn:Nielsen-funct-to-power-a}
P(x)=\left[ \beta^{(m)}(x) \right]^{\alpha}
\end{equation}
is convex for $x>0$ and $\alpha>0$.
\end{theorem}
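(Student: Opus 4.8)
The plan is to compute $P''(x)$ directly and show it is nonnegative, mirroring the discriminant argument used in Theorem~\ref{thm:expo-x-times-Nielsen-funct} but now exploiting the Turán-type inequality~\eqref{eqn:Turan-type-Nielsen-funct-abs}. First I would record that for even $m$ the function $\beta^{(m)}(x)$ is positive on $(0,\infty)$ by Lemma~\ref{lem:monotonicity-properties-Nielsen-function}(ii), so that $P(x)=[\beta^{(m)}(x)]^{\alpha}$ is well defined, strictly positive, and may be differentiated freely.

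Next I would differentiate twice. Writing $g=\beta^{(m)}$ for brevity, so that $g'=\beta^{(m+1)}(x)$ and $g''=\beta^{(m+2)}(x)$, one finds
\begin{align*}
P'(x) &= \alpha\, g^{\alpha-1} g' ,\\
P''(x) &= \alpha(\alpha-1)\, g^{\alpha-2}(g')^2 + \alpha\, g^{\alpha-1} g'' \\
&= \alpha\, g^{\alpha-2}\Big[(\alpha-1)(g')^2 + g\,g''\Big].
\end{align*}
Since $\alpha>0$ and $g^{\alpha-2}>0$, the sign of $P''(x)$ is precisely that of the bracketed term, so everything reduces to bounding that bracket below.

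The key step is to apply the Turán-type inequality~\eqref{eqn:Turan-type-Nielsen-funct-abs} with $n=m$. For even $m$ the absolute values may be dropped (recall $\beta^{(m)},\beta^{(m+2)}>0$), so it reads $(g')^2\le g\,g''$. Replacing $g\,g''$ by the smaller quantity $(g')^2$ can only decrease the bracket, whence
\begin{equation*}
(\alpha-1)(g')^2 + g\,g'' \ge (\alpha-1)(g')^2 + (g')^2 = \alpha\,(g')^2 \ge 0 ,
\end{equation*}
and therefore $P''(x)\ge 0$ for all $x>0$, giving convexity. The only subtlety worth flagging is the sign of $\alpha-1$: when $0<\alpha<1$ the first term $(\alpha-1)(g')^2$ is negative, so one cannot argue termwise. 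This is exactly where the Turán inequality carries the proof, and the single substitution above disposes of all $\alpha>0$ at once, with no case split needed.

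As an alternative (and shorter) route, I would note that the remark preceding this theorem already establishes that $\beta^{(m)}$ is logarithmically convex for even $m$; since then $\log P=\alpha\log\beta^{(m)}$ is convex for $\alpha>0$, the function $P$ is itself log-convex and hence convex. I expect the direct computation to be the cleaner presentation here, as it parallels the discriminant technique of the previous theorem and makes transparent the role of~\eqref{eqn:Turan-type-Nielsen-funct-abs}.
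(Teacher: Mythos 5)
Your proposal is correct and is essentially the paper's own argument: the paper computes $P''$ via the logarithmic derivative, arriving at $P''=P\bigl[(P'/P)^2+\alpha\,\frac{\beta^{(m+2)}\beta^{(m)}-(\beta^{(m+1)})^2}{(\beta^{(m)})^2}\bigr]$, which after substituting $P'/P=\alpha\beta^{(m+1)}/\beta^{(m)}$ is exactly your bracket $\alpha g^{\alpha-2}\bigl[(\alpha-1)(g')^2+g\,g''\bigr]$, and nonnegativity follows in both cases from the Turán-type inequality~(\ref{eqn:Turan-type-Nielsen-funct-abs}). Your closing observation that log-convexity of $\beta^{(m)}$ (from the remark after Theorem~\ref{thm:Ineq-mth-deriv-Nielsen-funct-via-Holder}) already yields the result is a valid and shorter alternative, but it is not the route the paper takes.
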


\begin{proof}
Let $m$ be even, $x>0$ and $\alpha>0$. Then
\begin{equation*}
\ln P(x)= \alpha \ln \beta^{(m)}(x) \quad \text{implies} \quad  \frac{P'(x)}{P(x)}=\alpha \frac{\beta^{(m+1)}(x)} {\beta^{(m)}(x)} .
\end{equation*}
That is,
\begin{equation*}
P'(x)=\alpha P(x) \frac{\beta^{(m+1)}(x)} {\beta^{(m)}(x)} 
\end{equation*}
and then
\begin{align*}
P''(x)&=P(x) \left\{ \left( \frac{P'(x)}{P(x)}\right)^2 + \alpha \left[ \frac{\beta^{(m+2)}(x)\beta^{(m)}(x) -(\beta^{(m+1)}(x))^2}{[\beta^{(m)}(x)]^2} \right]  \right\} \\
&\geq0
\end{align*}
as a result of ~(\ref{eqn:Turan-type-Nielsen-funct-abs}).
\end{proof}

\begin{theorem}\label{thm:Nielsen-funct-Ratio-Increasing}
Let $m\in \mathbb{N}_0$ be even. Then the function
\begin{equation}\label{eqn:Nielsen-funct-Ratio-Increasing}
U(x)=\frac{\beta^{(m)}(kx)}{\left[ \beta^{(m)}(x) \right]^k}
\end{equation}
is increasing if $k>1$ and decreasing if $0<k \leq1$.
\end{theorem}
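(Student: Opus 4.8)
The plan is to study the sign of the logarithmic derivative of $U$. Since $m$ is even, Lemma \ref{lem:monotonicity-properties-Nielsen-function}(ii) guarantees that $\beta^{(m)}(x)>0$ for all $x>0$, so $U(x)>0$ and I may pass to $g(x):=\ln U(x)=\ln\beta^{(m)}(kx)-k\ln\beta^{(m)}(x)$. Differentiating gives
\[
g'(x)=k\left[\phi(kx)-\phi(x)\right], \qquad \phi(t):=\frac{\beta^{(m+1)}(t)}{\beta^{(m)}(t)},
\]
so the monotonicity of $U$ is reduced to comparing the values of the auxiliary function $\phi$ (the logarithmic derivative of $\beta^{(m)}$) at the two points $kx$ and $x$.

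The key step is to show that $\phi$ is increasing on $(0,\infty)$. A direct computation yields
\[
\phi'(x)=\frac{\beta^{(m+2)}(x)\beta^{(m)}(x)-\left[\beta^{(m+1)}(x)\right]^2}{\left[\beta^{(m)}(x)\right]^2}.
\]
Because $m$ and $m+2$ are even, $\beta^{(m)}(x)$ and $\beta^{(m+2)}(x)$ are positive, and the Tur\'an-type inequality ~(\ref{eqn:Turan-type-Nielsen-funct-abs}) gives $\left[\beta^{(m+1)}(x)\right]^2\le\beta^{(m+2)}(x)\beta^{(m)}(x)$. Hence the numerator is nonnegative, so $\phi'(x)\ge0$ and $\phi$ is increasing (equivalently, $\ln\beta^{(m)}$ is convex, consistent with the log-convexity in ~(\ref{eqn:Ineq-mth-deriv-Nielsen-funct-log-convexity})).

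Finally I would split on the size of $k$. For $x>0$ and $k>1$ we have $kx>x$, so the monotonicity of $\phi$ forces $\phi(kx)\ge\phi(x)$; since $k>0$, this makes $g'(x)\ge0$, whence $g$, and therefore $U=e^{g}$, is increasing. For $0<k\le1$ we instead have $kx\le x$, so $\phi(kx)\le\phi(x)$ and $g'(x)\le0$, giving that $U$ is decreasing. The only real content of the argument is the monotonicity of $\phi$, and that rests entirely on the Tur\'an-type inequality ~(\ref{eqn:Turan-type-Nielsen-funct-abs}); once it is in hand the case analysis is immediate. I expect the only subtlety to be bookkeeping the parity of $m$ so that both $\beta^{(m)}$ and $\beta^{(m+2)}$ are positive and the absolute values in ~(\ref{eqn:Turan-type-Nielsen-funct-abs}) may be dropped.
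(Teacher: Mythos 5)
Your proposal is correct and follows essentially the same route as the paper: the paper likewise passes to $u(x)=\ln U(x)$, introduces the auxiliary ratio $S(x)=\beta^{(m+1)}(x)/\beta^{(m)}(x)$, shows $S'(x)\geq 0$ from the Tur\'an-type inequality ~(\ref{eqn:Turan-type-Nielsen-funct-abs}), and then compares $S(kx)$ with $S(x)$ according to whether $k>1$ or $0<k\leq1$. Your added remarks on the parity bookkeeping and the equivalence with log-convexity are accurate but do not change the argument.
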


\begin{proof}
For $x>0$ and $m$ even, define a function $S$ by 
\begin{equation*}
S(x)=\frac{\beta^{(m+1)}(x)}{\beta^{(m)}(x)} .
\end{equation*}
Then direct differentiation yields
\begin{equation*}
S'(x)=\frac{\beta^{(m+2)}(x)\beta^{(m)}(x)-\left(\beta^{(m+1)}(x)\right)^2}{\left[\beta^{(m)}(x)\right]^2}
\end{equation*}
and by ~(\ref{eqn:Turan-type-Nielsen-funct-abs}), we conclude that $S'(x)\geq0$. Hence $S(x)$ is increasing. Next, let $u(x)=\ln U(x)$. Then we obtain
\begin{equation*}
u'(x)= k \left[\frac{\beta^{(m+1)}(kx)}{\beta^{(m)}(kx)} - \frac{\beta^{(m+1)}(x)}{\beta^{(m)}(x)} \right] .
\end{equation*}
Since $S(x)$ is increasing, it follows that $u'(x)>0$ if $k>1$ and  $u'(x)\leq0$ if $0<k\leq1$. This completes the proof.
\end{proof}

\begin{corollary}\label{cor:Nielsen-funct-Ratio-Ineq}
Let $m\in \mathbb{N}_0$ be even and $0<x\leq y$. Then the inequality
\begin{equation}\label{eqn:Nielsen-funct-Ratio-Ineq}
\left( \frac{\beta^{(m)}(y)}{\beta^{(m)}(x)} \right)^k \leq \frac{\beta^{(m)}(ky)}{\beta^{(m)}(kx)} 
\end{equation}
is satisfied if $k>1$. It reverses if $0<k \leq1$.
\end{corollary}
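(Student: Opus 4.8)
The plan is to deduce this inequality immediately from the monotonicity of the function $U$ established in Theorem \ref{thm:Nielsen-funct-Ratio-Increasing}. First I would fix an even $m\in\mathbb{N}_0$ and $0<x\leq y$, and recall from Lemma \ref{lem:monotonicity-properties-Nielsen-function}(ii) that $\beta^{(m)}(t)>0$ for every $t>0$ when $m$ is even. This positivity is what guarantees that every quantity appearing in $U(t)=\beta^{(m)}(kt)/[\beta^{(m)}(t)]^k$ is strictly positive, so that the cross-multiplications performed below preserve (or reverse) inequalities in a controlled way.

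For the case $k>1$, Theorem \ref{thm:Nielsen-funct-Ratio-Increasing} tells us that $U$ is increasing, so $x\leq y$ yields $U(x)\leq U(y)$, that is,
\begin{equation*}
\frac{\beta^{(m)}(kx)}{\left[\beta^{(m)}(x)\right]^k}\leq\frac{\beta^{(m)}(ky)}{\left[\beta^{(m)}(y)\right]^k}.
\end{equation*}
I would then multiply both sides by the positive factor $\left[\beta^{(m)}(y)\right]^k/\beta^{(m)}(kx)$. The left-hand side collapses to $\left(\beta^{(m)}(y)/\beta^{(m)}(x)\right)^k$ and the right-hand side to $\beta^{(m)}(ky)/\beta^{(m)}(kx)$, which is precisely the claimed inequality~(\ref{eqn:Nielsen-funct-Ratio-Ineq}).

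For $0<k\leq1$, the function $U$ is instead decreasing, so $x\leq y$ gives $U(x)\geq U(y)$; repeating the identical rearrangement then reverses the inequality, as asserted. I do not anticipate any genuine obstacle here: the argument is a one-line consequence of the monotonicity already proved, and the only point requiring care is keeping track of the direction of the inequality in the two cases and invoking the positivity of $\beta^{(m)}$ for even $m$ to justify the cross-multiplication.
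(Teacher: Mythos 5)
Your proposal is correct and follows exactly the route the paper takes: the corollary is deduced directly from the monotonicity of $U(x)=\beta^{(m)}(kx)/[\beta^{(m)}(x)]^{k}$ established in Theorem \ref{thm:Nielsen-funct-Ratio-Increasing}, with the positivity of $\beta^{(m)}$ for even $m$ justifying the rearrangement. The paper states this in one line; you have merely written out the cross-multiplication explicitly.
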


\begin{proof}
This follows from the monotonicity property of $U(x)$ as defined in ~(\ref{eqn:Nielsen-funct-Ratio-Increasing}).
\end{proof}

\begin{theorem}\label{thm:Nielsen-funct-log-concave-and-Ineq}
Let $m\in \mathbb{N}_0$ be even and $a>0$. Then for $x>0$, the function
\begin{equation*}\label{eqn:Nielsen-funct-log-concavity}
\Omega(x)=\frac{\beta^{(m)}(a)}{\beta^{(m)}(x+a)}
\end{equation*}
is increasing and logarithmically concave, and  the inequality
\begin{equation}\label{eqn:Nielsen-funct-log-concavity-Ineq}
1<\frac{\beta^{(m)}(a)}{\beta^{(m)}(x+a)}
\end{equation}
is satisfied.
\end{theorem}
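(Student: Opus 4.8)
The plan is to exploit the three facts about $\beta^{(m)}$ for even $m$ that are already in hand: it is positive and decreasing (Lemma \ref{lem:monotonicity-properties-Nielsen-function}(ii)), its odd-order derivatives are negative (Lemma \ref{lem:monotonicity-properties-Nielsen-function}(iii)), and it is logarithmically convex (the remark following Theorem \ref{thm:Ineq-mth-deriv-Nielsen-funct-via-Holder}, equation \eqref{eqn:Ineq-mth-deriv-Nielsen-funct-log-convexity}). Since $\beta^{(m)}(a)$ is a fixed positive constant, the whole analysis reduces to understanding $\beta^{(m)}(x+a)$ as a function of $x$.

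First I would establish monotonicity by direct differentiation. Writing
\begin{equation*}
\Omega'(x) = -\beta^{(m)}(a)\,\frac{\beta^{(m+1)}(x+a)}{\left[\beta^{(m)}(x+a)\right]^{2}},
\end{equation*}
the factor $\beta^{(m)}(a)$ is positive and $\left[\beta^{(m)}(x+a)\right]^{2}>0$, while $\beta^{(m+1)}(x+a)<0$ because $m+1$ is odd (Lemma \ref{lem:monotonicity-properties-Nielsen-function}(iii)). Hence $\Omega'(x)>0$ and $\Omega$ is increasing. Alternatively, since $\beta^{(m)}$ is decreasing and $x+a>a$ for $x>0$, the denominator decreases in $x$ while the numerator stays fixed, giving the same conclusion.

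For logarithmic concavity I would pass to $\ln\Omega(x)=\ln\beta^{(m)}(a)-\ln\beta^{(m)}(x+a)$. The first term is constant, and $x\mapsto\ln\beta^{(m)}(x+a)$ is convex because $\beta^{(m)}$ is logarithmically convex for even $m$; therefore $-\ln\beta^{(m)}(x+a)$ is concave and so is $\ln\Omega$. If a self-contained computation is preferred, differentiating twice yields
\begin{equation*}
(\ln\Omega)''(x) = -\frac{\beta^{(m+2)}(x+a)\,\beta^{(m)}(x+a)-\bigl(\beta^{(m+1)}(x+a)\bigr)^{2}}{\left[\beta^{(m)}(x+a)\right]^{2}},
\end{equation*}
which is $\le 0$ by the Tur\'an-type inequality \eqref{eqn:Turan-type-Nielsen-funct-abs}, the bracketed numerator being nonnegative. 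Finally, inequality \eqref{eqn:Nielsen-funct-log-concavity-Ineq} is immediate: since $\beta^{(m)}$ is strictly decreasing and $x+a>a$, one has $\beta^{(m)}(x+a)<\beta^{(m)}(a)$, and dividing by the positive quantity $\beta^{(m)}(x+a)$ gives $\Omega(x)>1$. Equivalently, $\Omega$ is increasing with $\lim_{x\to 0^{+}}\Omega(x)=1$.

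I do not anticipate a serious obstacle, since the genuine analytic content — logarithmic convexity from H\"older and the Tur\'an inequality — has already been secured; the only thing to watch carefully is the sign bookkeeping, in particular that $\beta^{(m+1)}$ is \emph{negative} for even $m$, so that $\Omega'>0$ rather than $\Omega'<0$.
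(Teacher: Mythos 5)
Your proposal is correct and follows essentially the same route as the paper: monotonicity from the sign of $\beta^{(m+1)}$ (negative for $m+1$ odd), logarithmic concavity from the same second-derivative computation of $\ln\Omega$ controlled by the Tur\'an-type inequality \eqref{eqn:Turan-type-Nielsen-funct-abs}, and the final bound $\Omega(x)>1$. Your derivation of that last inequality directly from the strict decrease of $\beta^{(m)}$ is in fact slightly cleaner than the paper's limit argument, but the substance is identical.
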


\begin{proof}
Define $\mu$ for $m\in \mathbb{N}_0$ even, $a>0$ and $x>0$ by 
\begin{equation*}
\mu(x) = \ln \Omega(x) = \ln \beta^{(m)}(a) - \ln \beta^{(m)}(x+a) .
\end{equation*}
Then 
\begin{equation*}
\mu'(x) = -\frac{\beta^{(m+1)}(x+a)}{\beta^{(m)}(x+a)}>0
\end{equation*}
which implies that $\mu(x)$ in increasing. Consequently, $\Omega(x)=e^{\mu(x)}$ is  increasing. Next, we have
\begin{equation*}
\left( \ln \Omega(x) \right)'' = - \left[ \frac{\beta^{(m+2)}(x+a)\beta^{(m)}(x+a) -(\beta^{(m+1)}(x+a))^2}{[\beta^{(m)}(x+a)]^2} \right] \leq0
\end{equation*}
which implies that $ \Omega(x)$ is logarithmically concave. Furthermore,
\begin{equation*}
\lim_{x \rightarrow 0^+}\Omega(x)=1 \quad \text{and} \quad  \lim_{x \rightarrow \infty}\Omega(x) =\infty .
\end{equation*}
Then since $\Omega(x)$ is increasing, we obtain the result ~(\ref{eqn:Nielsen-funct-log-concavity-Ineq}). 
\end{proof}

\begin{theorem}\label{thm:sub-super-additivity-Nielsen-funct}
Let $m\in \mathbb{N}_0$. Then the following inequalities hold for $x,y>0$.
\begin{equation}\label{eqn:sub-super-additivity-even}
\beta^{(m)}(x+y) \leq \beta^{(m)}(x) + \beta^{(m)}(y)
\end{equation}
if $m$ is even, and
\begin{equation}\label{eqn:sub-super-additivity-odd}
\beta^{(m)}(x+y) \geq \beta^{(m)}(x) + \beta^{(m)}(y)
\end{equation}
if $m$ is odd.
\end{theorem}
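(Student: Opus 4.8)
The plan is to reduce both inequalities to a single elementary estimate on the integrands furnished by the representation~(\ref{eqn:m-deriv-Nielsen-funt-Integral-Rep-2}), and then to separate the two parities by tracking the sign $(-1)^{m}$. Starting from
\begin{equation*}
\beta^{(m)}(x)=(-1)^{m}\int_0^{\infty}\frac{t^{m}e^{-xt}}{1+e^{-t}}\,dt,
\end{equation*}
I first observe that the kernel $\frac{t^{m}}{1+e^{-t}}$ is nonnegative on $(0,\infty)$, so the sign of $\beta^{(m)}$ is governed entirely by $(-1)^{m}$, consistent with Lemma~\ref{lem:monotonicity-properties-Nielsen-function}. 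This reduces the problem to controlling the common integral $\bigl|\beta^{(m)}(x)\bigr|=\int_0^{\infty}\frac{t^{m}e^{-xt}}{1+e^{-t}}\,dt$.

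The crux is the pointwise inequality
\begin{equation*}
e^{-(x+y)t}\leq e^{-xt}+e^{-yt},\qquad t>0,\ x,y>0,
\end{equation*}
which holds because $e^{-(x+y)t}=e^{-xt}e^{-yt}\leq e^{-xt}$ (using $0<e^{-yt}<1$) and the remaining summand $e^{-yt}$ is positive. Multiplying through by the nonnegative kernel $\frac{t^{m}}{1+e^{-t}}$ and integrating over $(0,\infty)$, I would obtain
\begin{equation*}
\int_0^{\infty}\frac{t^{m}e^{-(x+y)t}}{1+e^{-t}}\,dt\leq\int_0^{\infty}\frac{t^{m}e^{-xt}}{1+e^{-t}}\,dt+\int_0^{\infty}\frac{t^{m}e^{-yt}}{1+e^{-t}}\,dt,
\end{equation*}
that is, $\bigl|\beta^{(m)}(x+y)\bigr|\leq\bigl|\beta^{(m)}(x)\bigr|+\bigl|\beta^{(m)}(y)\bigr|$ for every $m\in\mathbb{N}_0$.

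It then remains to resolve the two cases by parity. When $m$ is even, $(-1)^{m}=1$, so the displayed integral inequality is precisely~(\ref{eqn:sub-super-additivity-even}). When $m$ is odd, $(-1)^{m}=-1$, whence $\beta^{(m)}(x)=-\int_0^{\infty}\frac{t^{m}e^{-xt}}{1+e^{-t}}\,dt$; multiplying the same integral inequality by $-1$ reverses it and produces~(\ref{eqn:sub-super-additivity-odd}).

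I expect no serious obstacle, since the whole argument is driven by an entirely elementary estimate on the integrands together with positivity of the kernel. The only point that genuinely requires care is the bookkeeping of the $(-1)^{m}$ factor, which is exactly what turns subadditivity for even $m$ into superadditivity for odd $m$. An alternative route would combine convexity or concavity of $\beta^{(m)}$ with its boundary behavior at $0^{+}$ and $\infty$, but the integral approach is cleaner and disposes of both parities simultaneously.
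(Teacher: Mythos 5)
Your proof is correct, but it takes a genuinely different route from the paper. The paper fixes $y$, sets $H(x)=\beta^{(m)}(x+y)-\beta^{(m)}(x)-\beta^{(m)}(y)$, shows via the integral representation that $H'(x)\geq 0$ for even $m$ (so $H$ is increasing), and then computes $\lim_{x\to\infty}H(x)=-\int_0^\infty \frac{t^m e^{-yt}}{1+e^{-t}}\,dt\leq 0$ to conclude $H\leq 0$; the odd case is handled by reversing all the signs. Your argument instead rests on the single pointwise estimate $e^{-(x+y)t}=e^{-xt}e^{-yt}\leq e^{-xt}\leq e^{-xt}+e^{-yt}$, multiplied by the nonnegative kernel and integrated, which yields $\bigl|\beta^{(m)}(x+y)\bigr|\leq\bigl|\beta^{(m)}(x)\bigr|+\bigl|\beta^{(m)}(y)\bigr|$ for all $m$ at once; the parity bookkeeping with $(-1)^m$ then splits this into the two stated inequalities. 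Your route is more elementary --- it needs no differentiation under the integral sign, no monotonicity lemma, and no limit computation --- and it packages both parities into one inequality for the absolute value, which is arguably the cleaner statement. What the paper's approach buys is a reusable template (monotone auxiliary function plus boundary behavior) that the author also deploys for the star-shapedness result in Theorem \ref{thm:starshaped-Nielsen-funct}. Both proofs are complete and correct.
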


\begin{proof}
Let $m$ be even and $H(x)=\beta^{(m)}(x+y) - \beta^{(m)}(x) - \beta^{(m)}(y)$. Then for a fixed $y$, we obtain
\begin{align*}
H'(x)&=\beta^{(m+1)}(x+y) - \beta^{(m+1)}(x)  \\
&= (-1)^{(m+1)} \int_{0}^{\infty} \frac{t^{m}\left( e^{-(x+y)t}-e^{-xt}\right)}{1+e^{-t}}\,dt  \\
&= - \int_{0}^{\infty} \frac{t^{m}}{1+e^{-t}}\left( e^{-yt} - 1\right)\,dt  \\
&\geq0.
\end{align*}
Hence, $H(x)$ is increasing. Moreover,
\begin{align*}
\lim_{x \rightarrow \infty}H(x)&=\lim_{x \rightarrow \infty}\left[ \beta^{(m)}(x+y) - \beta^{(m)}(x) - \beta^{(m)}(y) \right] \\
&= (-1)^{m} \lim_{x \rightarrow \infty}\left[ \int_{0}^{\infty} \frac{t^{m}}{1+e^{-t}}\left( e^{-(x+y)t} - e^{-xt} - e^{-yt}  \right)\,dt \right] \\
&= - \int_{0}^{\infty} \frac{t^{m}e^{-yt}}{1+e^{-t}}\,dt\\
&\leq0.
\end{align*}
Therefore, $H(x)\leq0$ which gives the result ~(\ref{eqn:sub-super-additivity-even}). Similarly, for $m$ odd,  we obtain $H'(x)\leq0$ and $\lim_{x \rightarrow \infty}H(x)\geq0$ which implies that $H(x)>0$ and this gives the result ~(\ref{eqn:sub-super-additivity-odd}). 
\end{proof}

\begin{remark}
Theorem \ref{thm:sub-super-additivity-Nielsen-funct} is another way of saying that the function $\beta^{(m)}(x)$ is subadditive if $m$ is even, and superadditive if $m$ is odd.
\end{remark}

\begin{theorem}\label{thm:starshaped-Nielsen-funct}
Let $m\in \mathbb{N}_0$. Then for $m$ odd, the function $\beta^{(m)}(x)$ is star-shaped on $(0,\infty)$. That is, 
\begin{equation}\label{eqn:starshaped-Nielsen-funct}
\beta^{(m)}(\alpha x) \leq \alpha \beta^{(m)}(x) 
\end{equation}
for all $x\in(0,\infty)$ and $\alpha \in(0,1]$.
\end{theorem}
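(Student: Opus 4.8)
The plan is to use the standard characterization of star-shapedness: a function $f$ is star-shaped on $(0,\infty)$ if and only if the quotient $x \mapsto f(x)/x$ is nondecreasing. Indeed, if $f(x)/x$ is nondecreasing then for $\alpha \in (0,1]$ we have $\alpha x \le x$, hence $\frac{f(\alpha x)}{\alpha x} \le \frac{f(x)}{x}$, and multiplying through by $\alpha x > 0$ gives exactly $f(\alpha x) \le \alpha f(x)$. So the whole theorem reduces to showing that $g(x) := \beta^{(m)}(x)/x$ is nondecreasing on $(0,\infty)$ when $m$ is odd.

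First I would compute $g'(x) = \dfrac{x\beta^{(m+1)}(x) - \beta^{(m)}(x)}{x^2}$, so that the sign of $g'$ is governed by the numerator $N(x) := x\beta^{(m+1)}(x) - \beta^{(m)}(x)$. Then I would read off the signs of the two pieces directly from Lemma \ref{lem:monotonicity-properties-Nielsen-function}: since $m$ is odd, $m+1$ is even, so $\beta^{(m+1)}(x) > 0$, giving $x\beta^{(m+1)}(x) \ge 0$; and $\beta^{(m)}(x) < 0$, giving $-\beta^{(m)}(x) > 0$. Hence $N(x) > 0$ for all $x > 0$, so $g'(x) > 0$ and $g$ is increasing. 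Applying the characterization above then yields the inequality ~(\ref{eqn:starshaped-Nielsen-funct}).

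I do not expect a serious obstacle; the crux is simply to recognize the $f(x)/x$ reformulation, after which the sign analysis is immediate from the monotonicity lemma and needs no delicate estimates. The one point worth stating carefully is the equivalence between star-shapedness and monotonicity of the quotient, together with the observation that the two terms in $N(x)$ \emph{cooperate} (both contribute nonnegatively) precisely because $m$ is odd; for even $m$ the signs would conflict and this particular argument would break.

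Alternatively, I could fix $x > 0$ and study $F(\alpha) = \alpha\beta^{(m)}(x) - \beta^{(m)}(\alpha x)$ on $(0,1]$. Here $F(1) = 0$, while $F''(\alpha) = -x^2\beta^{(m+2)}(\alpha x) \ge 0$ (so $F$ is convex, using $\beta^{(m+2)} \le 0$ for $m$ odd), and $F'(1) = \beta^{(m)}(x) - x\beta^{(m+1)}(x) \le 0$ by the same sign considerations. Convexity then forces $F'(\alpha) \le F'(1) \le 0$, so $F$ decreases to $F(1) = 0$, whence $F(\alpha) \ge 0$, which is again ~(\ref{eqn:starshaped-Nielsen-funct}). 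I would present the quotient argument as the main line since it is shortest, and keep this convexity route as a cross-check.
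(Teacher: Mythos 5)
Your proposal is correct, but it follows a genuinely different route from the paper. The paper fixes $\alpha$ and studies $T(x)=\beta^{(m)}(\alpha x)-\alpha\beta^{(m)}(x)$ as a function of $x$: it shows $T'(x)=\alpha\left[\beta^{(m+1)}(\alpha x)-\beta^{(m+1)}(x)\right]\geq 0$ because $\beta^{(m+1)}$ is decreasing ($m+1$ being even) and $\alpha x\leq x$, and then computes $\lim_{x\to\infty}T(x)=0$ from the integral representation~(\ref{eqn:m-deriv-Nielsen-funt-Integral-Rep-2}); an increasing function tending to $0$ at infinity is nonpositive, which is~(\ref{eqn:starshaped-Nielsen-funct}). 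Your main argument instead uses the standard reformulation of star-shapedness as monotonicity of the quotient $x\mapsto \beta^{(m)}(x)/x$, and the sign of the numerator $x\beta^{(m+1)}(x)-\beta^{(m)}(x)$ is read off directly from Lemma~\ref{lem:monotonicity-properties-Nielsen-function}: for $m$ odd both terms are positive, since $\beta^{(m+1)}>0$ and $-\beta^{(m)}>0$. This buys you a shorter proof with strictly weaker input --- you need only the signs of $\beta^{(m)}$ and $\beta^{(m+1)}$, not the monotonicity of $\beta^{(m+1)}$ and no limit computation at infinity --- and it actually establishes the slightly stronger fact that $\beta^{(m)}(x)/x$ is strictly increasing. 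Your convexity cross-check in the variable $\alpha$ is also sound: $F''(\alpha)=-x^{2}\beta^{(m+2)}(\alpha x)>0$ since $m+2$ is odd, $F'(1)\leq 0$ by the same sign considerations, and $F(1)=0$, so convexity forces $F\geq 0$ on $(0,1]$. Either of your arguments could stand in place of the paper's proof.
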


\begin{proof}
Let $m$ be odd and $T(x)=\beta^{(m)}(\alpha x) -\alpha \beta^{(m)}(x)$. Then for $x\in(0,\infty)$ and $\alpha \in(0,1]$, we have
\begin{align*}
T'(x)&= \alpha \left[ \beta^{(m+1)}(\alpha x) -  \beta^{(m+1)}(x) \right] \\
&\geq0.
\end{align*}
Thus, $T(x)$ is increasing. Recall that $\beta^{(n)}(x)$ is decreasing for even $n$. Then since $0<\alpha x\leq x$, we have $\beta^{(m+1)}(\alpha x) \geq \beta^{(m+1)}(x)$.   Furthermore,
\begin{align*}
\lim_{x \rightarrow \infty}T(x)&=\lim_{x \rightarrow \infty}\left[\beta^{(m)}(\alpha x) -\alpha \beta^{(m)}(x)\right] \\
&=  \lim_{x \rightarrow \infty}\left[\int_{0}^{\infty} \frac{t^{m}e^{-\alpha xt}}{1+e^{-t}}\,dt - 
\alpha \int_{0}^{\infty} \frac{t^{m}e^{-xt}}{1+e^{-t}}\,dt \right] \\
&=0.
\end{align*}
Therefore, $T(x)\leq0$ which completes the proof.
\end{proof}

\begin{theorem}\label{thm:Ineq-Nielsen-funct-Square-Product}
Let $m\in \mathbb{N}_0$. Then the inequality
\begin{equation}\label{eqn:Ineq-Nielsen-funct-Square-Product}
\left[\beta^{(m)}(xy)\right]^2 \leq \beta^{(m)}(x) \beta^{(m)}(y) 
\end{equation}
holds for $x\geq1$ and $y\geq1$.
\end{theorem}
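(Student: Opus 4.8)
The plan is to reduce the multiplicative statement \eqref{eqn:Ineq-Nielsen-funct-Square-Product} to the already-established logarithmic-convexity (H\"older) bound by means of an elementary comparison between the product $xy$ and the arithmetic mean $\frac{x+y}{2}$, combined with the monotonicity of $\bigl|\beta^{(m)}\bigr|$ recorded in Lemma \ref{lem:monotonicity-properties-Nielsen-function}. The whole argument is a short chain of inequalities and introduces no new integral estimate.

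First I would prove the elementary fact that $xy\geq \frac{x+y}{2}$ whenever $x,y\geq 1$. Indeed, $2xy-x-y=x(2y-1)-y\geq (2y-1)-y=y-1\geq 0$, since $x\geq 1$ and $2y-1\geq 1$. This is the only place where the hypotheses $x\geq 1$ and $y\geq 1$ are used. Next, because $\bigl|\beta^{(m)}(x)\bigr|$ is decreasing on $(0,\infty)$ for every $m\in\mathbb{N}_0$ (Lemma \ref{lem:monotonicity-properties-Nielsen-function}(iv) for $m\geq 1$, and parts (i)--(ii) giving $\bigl|\beta^{(0)}\bigr|=\beta$ positive and decreasing when $m=0$), the comparison $xy\geq\frac{x+y}{2}$ yields $\bigl|\beta^{(m)}(xy)\bigr|\leq \bigl|\beta^{(m)}\bigl(\tfrac{x+y}{2}\bigr)\bigr|$. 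Squaring these two nonnegative quantities gives
\begin{equation*}
\left[\beta^{(m)}(xy)\right]^2=\bigl|\beta^{(m)}(xy)\bigr|^2\leq \Bigl|\beta^{(m)}\Bigl(\tfrac{x+y}{2}\Bigr)\Bigr|^2 .
\end{equation*}

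Finally I would invoke Theorem \ref{thm:Ineq-mth-deriv-Nielsen-funct-via-Holder} with the specialisation $n=m$ and $a=b=2$, for which $\frac{m}{a}+\frac{n}{b}=m\in\mathbb{N}_0$, to obtain $\bigl|\beta^{(m)}\bigl(\tfrac{x+y}{2}\bigr)\bigr|\leq \bigl|\beta^{(m)}(x)\bigr|^{1/2}\bigl|\beta^{(m)}(y)\bigr|^{1/2}$, and hence after squaring $\bigl|\beta^{(m)}\bigl(\tfrac{x+y}{2}\bigr)\bigr|^2\leq \bigl|\beta^{(m)}(x)\bigr|\,\bigl|\beta^{(m)}(y)\bigr|$. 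Chaining this with the previous display produces $\left[\beta^{(m)}(xy)\right]^2\leq \bigl|\beta^{(m)}(x)\bigr|\,\bigl|\beta^{(m)}(y)\bigr|$, and the proof is finished once one observes that $\beta^{(m)}(x)$ and $\beta^{(m)}(y)$ always carry the same sign (both positive for even $m$, both negative for odd $m$), so that $\bigl|\beta^{(m)}(x)\bigr|\,\bigl|\beta^{(m)}(y)\bigr|=\beta^{(m)}(x)\,\beta^{(m)}(y)$.

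The conceptual key, and the step I would flag as the crux, is recognizing that the product structure $\beta^{(m)}(xy)$ can be tied to the arithmetic-mean structure of \eqref{eqn:Ineq-mth-deriv-Nielsen-funct-via-Holder} precisely through $xy\geq\frac{x+y}{2}$ on $[1,\infty)^2$. The only real subtlety in execution is the bookkeeping of signs: since $\beta^{(m)}$ is negative for odd $m$, I must work throughout with $\bigl|\beta^{(m)}\bigr|$ and convert back to $\beta^{(m)}(x)\,\beta^{(m)}(y)$ only at the very end, using that a product of two equal-sign factors is nonnegative.
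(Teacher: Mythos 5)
Your proof is correct, but it follows a genuinely different route from the paper's. The paper uses the cruder (and in fact stronger) observation that $xy\geq x$ and $xy\geq y$ when $x,y\geq 1$: combined with Lemma \ref{lem:monotonicity-properties-Nielsen-function}, this gives $0<\beta^{(m)}(xy)\leq\beta^{(m)}(x)$ and $0<\beta^{(m)}(xy)\leq\beta^{(m)}(y)$ for even $m$ (with both inequalities reversed and all quantities negative for odd $m$), and multiplying the two bounds finishes the proof in two lines with no convexity input at all. You instead pass through the arithmetic mean via $xy\geq\frac{x+y}{2}$ and then invoke the H\"{o}lder/log-convexity bound of Theorem \ref{thm:Ineq-mth-deriv-Nielsen-funct-via-Holder} at the midpoint to reach the geometric mean $\left|\beta^{(m)}(x)\right|^{1/2}\left|\beta^{(m)}(y)\right|^{1/2}$. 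Every step checks out: the algebra $2xy-x-y=x(2y-1)-y\geq y-1\geq 0$ is right, the monotonicity of $\left|\beta^{(m)}\right|$ covers $m=0$ via Lemma \ref{lem:monotonicity-properties-Nielsen-function}(i)--(ii), the specialisation $n=m$, $a=b=2$ satisfies the hypothesis $\frac{m}{a}+\frac{n}{b}=m\in\mathbb{N}_0$, and the sign bookkeeping at the end is handled correctly. What your route buys is a conceptual link to the log-convexity structure of $\beta^{(m)}$; what it costs is that the detour is unnecessary, since $\frac{x+y}{2}\leq\max(x,y)\leq xy$ means the direct comparison already yields a bound at least as strong as the one you obtain after applying H\"{o}lder. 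The paper's argument is the more elementary and self-contained of the two.
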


\begin{proof}
We have $xy\geq x$ and $xy\geq y$ since $x\geq 1$ and $y\geq1$. If $m$ is even, then we obtain
\begin{equation*}
0<\beta^{(m)}(xy) \leq \beta^{(m)}(x)
\end{equation*}
and
\begin{equation*}
0<\beta^{(m)}(xy) \leq \beta^{(m)}(y)
\end{equation*}
since $\beta^{(m)}(x)$ is decreasing for even $m$ (see Lemma \ref{lem:monotonicity-properties-Nielsen-function}). That implies
\begin{equation*}
\left[\beta^{(m)}(xy)\right]^2 \leq \beta^{(m)}(x) \beta^{(m)}(y) .
\end{equation*}
Also, if $m$ is odd, then we have
\begin{equation*}
0>\beta^{(m)}(xy) \geq \beta^{(m)}(x)
\end{equation*}
and
\begin{equation*}
0>\beta^{(m)}(xy) \geq \beta^{(m)}(y)
\end{equation*}
since $\beta^{(m)}(x)$ is increasing for odd $m$, and that also implies
\begin{equation*}
\left[\beta^{(m)}(xy)\right]^2 \leq \beta^{(m)}(x) \beta^{(m)}(y) 
\end{equation*}
which completes the proof.
\end{proof}

\noindent
A generalization of Theorem \ref{thm:Ineq-Nielsen-funct-Square-Product} is given as follows.

\begin{theorem}\label{thm:Ineq-Nielsen-funct-Square-Product-Gen}
Let $n\in \mathbb{N}$ and $m\in \mathbb{N}_0$ such that $m$ is even. Then the inequality
\begin{equation}\label{eqn:Ineq-Nielsen-funct-Square-Product-Gen}
\beta^{(m)}\left(\prod_{i=1}^{n}x_i \right) \leq 
\left( \prod_{i=1}^{n} \beta^{(m)}(x_i) \right)^{\frac{1}{n}}
\end{equation}
holds for $x_i\geq1$, $i=1,2,3\dots,n$.
\end{theorem}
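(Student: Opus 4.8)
The plan is to mimic the proof of Theorem \ref{thm:Ineq-Nielsen-funct-Square-Product}, exploiting the fact that when every factor is at least one, the full product dominates each individual factor. First I would observe that since $x_i \geq 1$ for every $i$, the product satisfies
\begin{equation*}
\prod_{i=1}^{n} x_i = x_j \prod_{\substack{i=1 \\ i \neq j}}^{n} x_i \geq x_j
\end{equation*}
for each fixed $j \in \{1, 2, \dots, n\}$, because every omitted factor is $\geq 1$.

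Next, since $m$ is even, Lemma \ref{lem:monotonicity-properties-Nielsen-function} tells us that $\beta^{(m)}$ is positive and decreasing on $(0,\infty)$. Combining this with the inequality $\prod_{i=1}^{n} x_i \geq x_j$ gives
\begin{equation*}
0 < \beta^{(m)}\left(\prod_{i=1}^{n} x_i\right) \leq \beta^{(m)}(x_j)
\end{equation*}
for each $j = 1, 2, \dots, n$. I would then multiply these $n$ inequalities, all of whose terms are positive, to obtain
\begin{equation*}
\left[\beta^{(m)}\left(\prod_{i=1}^{n} x_i\right)\right]^{n} \leq \prod_{j=1}^{n} \beta^{(m)}(x_j),
\end{equation*}
and taking the positive $n$-th root of both sides yields exactly the desired inequality ~(\ref{eqn:Ineq-Nielsen-funct-Square-Product-Gen}). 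Note that setting $n=2$ recovers Theorem \ref{thm:Ineq-Nielsen-funct-Square-Product} in the even case, confirming that this is a genuine generalization.

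Since the argument is essentially a chain of elementary comparisons, there is no genuine obstacle; the only point requiring care is the hypothesis that $m$ be even. This is precisely what guarantees positivity of $\beta^{(m)}$, which is needed both to preserve the inequalities under multiplication and to extract the $n$-th root. For odd $m$ the values $\beta^{(m)}(x)$ are negative, and the product of $n$ of them changes sign with the parity of $n$, so the same manipulation does not carry over; this is presumably why the statement is restricted to even $m$, in contrast to Theorem \ref{thm:Ineq-Nielsen-funct-Square-Product}, where the squared left-hand side absorbs the sign and both parities can be handled.
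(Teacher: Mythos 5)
Your proof is correct and follows essentially the same route as the paper: use $\prod_{i=1}^{n}x_i \geq x_j$ together with the positivity and monotone decrease of $\beta^{(m)}$ for even $m$, multiply the resulting $n$ inequalities, and take the $n$-th root. Your added remarks on why positivity is essential and why odd $m$ fails are a useful clarification, but the core argument matches the paper's.
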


\begin{proof}
Since $x_i\geq1$ for $i=1,2,3\dots,n$, we have $\prod_{i=1}^{n}x_i \geq x_j$ for $j=1,2,3\dots,n$. For $m$ even, we have  
\begin{align*}
0<\beta^{(m)}\left(\prod_{i=1}^{n}x_i \right) & \leq \beta^{(m)}(x_1),  \\
0<\beta^{(m)}\left(\prod_{i=1}^{n}x_i \right) & \leq \beta^{(m)}(x_2),  \\
\vdots \qquad & \qquad  \qquad   \vdots \\
0<\beta^{(m)}\left(\prod_{i=1}^{n}x_i \right) & \leq \beta^{(m)}(x_n).
\end{align*}
Then by taking products of these inequalities, we obtain
\begin{equation*}
\left[ \beta^{(m)}\left(\prod_{i=1}^{n}x_i \right) \right]^n \leq 
\prod_{i=1}^{n} \beta^{(m)}(x_i) 
\end{equation*}
which completes the proof.
\end{proof}

\begin{theorem}\label{thm:ineq-mth-deriv-Nielsen-funct-via-Minkowski}
Let $m,n\in \mathbb{N}_0$ and $s\geq1$. Then, the inequality
\begin{equation}\label{eqn:ineq-mth-deriv-Nielsen-funct-via-Minkowski}
\left( \left| \beta^{(m)}(x)\right|  +  \left| \beta^{(n)}(y)\right| \right)^{\frac{1}{s}} \leq 
 \left| \beta^{(m)}(x)\right|^{\frac{1}{s}}  + \left| \beta^{(n)}(y)\right|^{\frac{1}{s}} 
\end{equation}
holds for $x,y>0$.
\end{theorem}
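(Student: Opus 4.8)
The plan is to reduce the claimed estimate to the classical Minkowski inequality by choosing the right pair of two-term sequences. I would observe first that the inequality does not actually depend on any fine property of $\beta$: the quantities $A := \left| \beta^{(m)}(x) \right|$ and $B := \left| \beta^{(n)}(y) \right|$ are simply non-negative real numbers (their non-negativity being immediate from the integral representation ~(\ref{eqn:m-deriv-Nielsen-funt-Integral-Rep-2}) or from Lemma \ref{lem:monotonicity-properties-Nielsen-function}), and the entire content of the statement is the subadditivity inequality $(A+B)^{1/s} \leq A^{1/s} + B^{1/s}$, valid for all $A,B\geq 0$ and $s\geq 1$.

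First I would recall Minkowski's inequality in its discrete form: for $s\geq 1$ and finite sequences $(a_i)$, $(b_i)$,
\begin{equation*}
\left( \sum_i |a_i+b_i|^s \right)^{\frac{1}{s}} \leq \left( \sum_i |a_i|^s \right)^{\frac{1}{s}} + \left( \sum_i |b_i|^s \right)^{\frac{1}{s}} .
\end{equation*}
Then I would apply it to the two two-term sequences
\begin{equation*}
(a_1,a_2) = \left( A^{\frac{1}{s}},\, 0 \right), \qquad (b_1,b_2) = \left( 0,\, B^{\frac{1}{s}} \right),
\end{equation*}
with $A$ and $B$ as above. Under this choice one has $a_i+b_i$ equal to $A^{1/s}$ and $B^{1/s}$ in the two coordinates, so the left-hand side collapses to $(A+B)^{1/s}$, while the two terms on the right reduce to $A^{1/s}$ and $B^{1/s}$; this is precisely ~(\ref{eqn:ineq-mth-deriv-Nielsen-funct-via-Minkowski}).

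There is essentially no analytic obstacle here; the only point requiring care is the algebraic bookkeeping in checking that $\sum_i |a_i+b_i|^s = A+B$ under the stated choice, so that raising to the power $1/s$ yields exactly the desired left-hand side, and that the two right-hand terms simplify correctly after the $s$-th root cancels the $1/s$-th power. For completeness I would also remark that the same inequality follows directly from the concavity of $t\mapsto t^{1/s}$ on $[0,\infty)$ together with the vanishing $0^{1/s}=0$, since a concave function that is non-negative at the origin is automatically subadditive; this furnishes an alternative, self-contained derivation that bypasses Minkowski entirely. I would likely present the Minkowski route as the main argument, in keeping with the statement's title, and mention the concavity argument as a remark.
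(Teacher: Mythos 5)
Your proof is correct, and it takes a genuinely more elementary route than the paper. The paper's own argument stays inside the integral representation ~(\ref{eqn:m-deriv-Nielsen-funt-Integral-Rep-2}): it first uses the pointwise inequality $u^{s}+v^{s}\leq (u+v)^{s}$ to combine the two integrands under a single $s$-th power, and then invokes the \emph{integral} form of Minkowski's inequality to split the resulting $L^{s}$-type norm into two pieces, recovering $\left| \beta^{(m)}(x)\right|^{1/s}+\left| \beta^{(n)}(y)\right|^{1/s}$. You instead observe --- correctly --- that the statement never uses any structural property of $\beta$ beyond the non-negativity of $A=\left|\beta^{(m)}(x)\right|$ and $B=\left|\beta^{(n)}(y)\right|$, and that the whole content is the subadditivity $(A+B)^{1/s}\leq A^{1/s}+B^{1/s}$ of the concave function $t\mapsto t^{1/s}$ on $[0,\infty)$; your reduction to the discrete two-term Minkowski inequality with the sequences $(A^{1/s},0)$ and $(0,B^{1/s})$ is a clean and complete verification of this, and your concavity remark gives a second self-contained route. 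What your approach buys is transparency: it makes clear that the theorem is a general fact about pairs of non-negative reals rather than a property of the Nielsen function. What the paper's approach buys is mainly thematic consistency with the neighbouring results (Theorem \ref{thm:Ineq-mth-deriv-Nielsen-funct-via-Holder} genuinely needs the integral representation and H\"{o}lder), but for this particular statement the integral machinery is dispensable, as your argument shows.
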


\begin{proof}
Note that  $u^s+v^s \leq (u+v)^s$, for $u,v\geq0$ and  $s\geq1$. Then by the Minkowski's inequality, we obtain
\begin{align*}
\left( \left| \beta^{(m)}(x)\right|  +  \left| \beta^{(n)}(y)\right| \right)^{\frac{1}{s}}
&=\left( \int_{0}^{\infty} \frac{t^{m}e^{-xt}}{1+e^{-t}}\,dt + \int_{0}^{\infty} \frac{t^{n}e^{-yt}}{1+e^{-t}}\,dt   \right)^{\frac{1}{s}} \\
&=\left( \int_{0}^{\infty} \left[ 
\left(\frac{t^{\frac{m}{s}}e^{\frac{-xt}{s}}}{(1+e^{-t})^{\frac{1}{s}}}\right)^s +  \left(\frac{t^{\frac{n}{s}}e^{\frac{-yt}{s}}}{(1+e^{-t})^{\frac{1}{s}}}\right)^s 
\right]\,dt  \right)^{\frac{1}{s}} \\
&\leq \left( \int_{0}^{\infty} \left[ 
\left(\frac{t^{\frac{m}{s}}e^{\frac{-xt}{s}}}{(1+e^{-t})^{\frac{1}{s}}}\right) +  \left(\frac{t^{\frac{n}{s}}e^{\frac{-yt}{s}}}{(1+e^{-t})^{\frac{1}{s}}}\right) 
\right]^s\,dt  \right)^{\frac{1}{s}} \\
&\leq \left( \int_{0}^{\infty} \frac{t^{m}e^{-xt}}{1+e^{-t}}\,dt \right)^{\frac{1}{s}}  +
         \left( \int_{0}^{\infty} \frac{t^{n}e^{-yt}}{1+e^{-t}}\,dt \right)^{\frac{1}{s}} \\
&= \left| \beta^{(m)}(x)\right|^{\frac{1}{s}}  + \left| \beta^{(n)}(y)\right|^{\frac{1}{s}} 
\end{align*}
which yields the desired result.
\end{proof}

\begin{remark}\label{rem:Funct-Eqn-Nielsen-Abs}
Notice that $\left| \beta^{(m)}(x)  \right| = (-1)^{m}\beta^{(m)}(x)$ for $m\in \mathbb{N}_0$ and $x>0$. Then by the recurrence relation ~(\ref{eqn:Nielsen-funct-Gen-Funct-Eqn}), we obtain
\begin{equation}\label{eqn:Funct-Eqn-Nielsen-Abs}
\left| \beta^{(m)}(x+1) \right| =  \frac{m!}{x^{m+1}} - \left| \beta^{(m)}(x) \right|
\end{equation}
which implies 
\begin{equation}\label{eqn:Ineq-mth-deriv-Nielsen-Abs}
\left| \beta^{(m)}(x) \right| \leq  \frac{m!}{x^{m+1}} .
\end{equation}
\end{remark}

\begin{theorem}\label{thm:ineq-mth-deriv-Nielsen-funct-via-MVT}
Let $m\in \mathbb{N}_0$ and $0<a<b$. Then, there exists a $\lambda \in (a,b)$ such that
\begin{equation}\label{eqn:ineq-mth-deriv-Nielsen-funct-via-MVT}
\left| \beta^{(m)}(b) - \beta^{(m)}(a) \right|  \leq (b-a) \frac{(m+1)!}{\lambda^{m+2}}
\end{equation}
\end{theorem}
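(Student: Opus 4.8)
The plan is to apply the classical Mean Value Theorem to the function $\beta^{(m)}$ on the interval $[a,b]$, and then to control the resulting derivative using the bound (\ref{eqn:Ineq-mth-deriv-Nielsen-Abs}) that was established in Remark \ref{rem:Funct-Eqn-Nielsen-Abs}. Since each derivative $\beta^{(m)}$ is infinitely differentiable on $(0,\infty)$ (by the integral representation (\ref{eqn:m-deriv-Nielsen-funt-Integral-Rep-2}) and the complete monotonicity noted in Proposition \ref{pro:complete-monotonicity-Nielsen-funct} and the remark following it), the function $\beta^{(m)}$ is continuous on $[a,b]$ and differentiable on $(a,b)$, so the hypotheses of the Mean Value Theorem are met.

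First I would invoke the Mean Value Theorem to produce a point $\lambda \in (a,b)$ satisfying
\begin{equation*}
\beta^{(m)}(b) - \beta^{(m)}(a) = (b-a)\,\beta^{(m+1)}(\lambda).
\end{equation*}
Taking absolute values and using that $b-a>0$ since $a<b$, this gives
\begin{equation*}
\left| \beta^{(m)}(b) - \beta^{(m)}(a) \right| = (b-a)\left| \beta^{(m+1)}(\lambda) \right|.
\end{equation*}

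The remaining task is to estimate $\left| \beta^{(m+1)}(\lambda) \right|$. Replacing $m$ by $m+1$ in the inequality (\ref{eqn:Ineq-mth-deriv-Nielsen-Abs})—which is permissible because $m+1 \in \mathbb{N}$ whenever $m \in \mathbb{N}_0$, and the derivation in Remark \ref{rem:Funct-Eqn-Nielsen-Abs} is valid for every nonnegative integer index—yields
\begin{equation*}
\left| \beta^{(m+1)}(\lambda) \right| \leq \frac{(m+1)!}{\lambda^{m+2}}.
\end{equation*}
Substituting this bound into the Mean Value Theorem equality produces exactly the claimed inequality (\ref{eqn:ineq-mth-deriv-Nielsen-funct-via-MVT}).

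I do not expect any genuine obstacle in this argument, as both ingredients are already available: the differentiability needed for the Mean Value Theorem and the pointwise estimate (\ref{eqn:Ineq-mth-deriv-Nielsen-Abs}). The only point requiring momentary care is to confirm that the estimate applies at the shifted order $m+1$, which is immediate from the form of (\ref{eqn:Ineq-mth-deriv-Nielsen-Abs}); note also that the theorem asserts only the existence of such a $\lambda$, so no explicit identification of the mean value point is necessary.
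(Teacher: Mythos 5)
Your proposal is correct and follows exactly the paper's own argument: apply the Mean Value Theorem to $\beta^{(m)}$ on $[a,b]$ to obtain $\lambda$, then bound $\left|\beta^{(m+1)}(\lambda)\right|$ via inequality (\ref{eqn:Ineq-mth-deriv-Nielsen-Abs}) with $m$ replaced by $m+1$. The only difference is that you spell out the differentiability hypotheses and the index shift more carefully than the paper does, which is a harmless improvement.
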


\begin{proof}
By the classical mean value theorem, there exist a $\lambda \in (a,b)$ such that $\frac{\beta^{(m)}(b)  - \beta^{(m)}(a) }{b-a} = \beta^{(m+1)}(\lambda)$. Thus, $\frac{\left| \beta^{(m)}(b)  - \beta^{(m)}(a) \right|}{(b-a)} = \left| \beta^{(m+1)}(\lambda) \right|$  and by ~(\ref{eqn:Ineq-mth-deriv-Nielsen-Abs}), we obtain the result ~(\ref{eqn:ineq-mth-deriv-Nielsen-funct-via-MVT}). 
\end{proof}

\section{Conclusion}

\noindent
In this study, we obtained some convexity, monotonicity and additivity properties as well as some inequalities involving the Nielsen's $\beta$-function.  The established results may be useful in evaluating or estimating certain integrals. Furthermore, the findings could provide useful information for further study of the function.

\section*{Conflict of Interests}
\noindent
The author declares that there is no conflict of interests regarding the publication of this paper.

\bibliographystyle{plain}


\end{document}